\numberwithin{equation}{section}
\newdimen\vintkern\vintkern12pt
\def\vint{-\kern-\vintkern\int}
\newtheorem{thm}{Theorem}[section]
\newtheorem{lem}[thm]{Lemma}
\newtheorem{cor}[thm]{Corollary}
\newtheorem{prop}[thm]{Proposition}
\newtheorem{quest}[thm]{Question}
\newtheorem{rem}[thm]{Remark}
\newcommand{\tref}[1]{Theorem~\ref{#1}}
\newcommand{\cref}[1]{Corollary~\ref{#1}}
\newcommand{\R}{\mathbb{R}}
\begin{document}
	\pagebreak
	
	
	\title{A note on subsets of positive reach}
	
\thanks{
	The author was partially supported by the DFG grants  SFB TRR 191.}
	
	\author{Alexander Lytchak}
	
	\address
	{Institute of Algebra and Geometry\\ KIT\\ Englerstr. 2\\ 76131 Karlsruhe, Germany}
	\email{alexander.lytchak@kit.edu}

\keywords
{Semiconvex functions, semiconvex sets, regular points}
\subjclass
[2020]{53C20, 53C45, 52A30}

	\begin{abstract}
	We  provide  new structural results on sets of positive reach in Euclidean spaces and Riemannian manifolds.
	\end{abstract}
	
	\maketitle

	\renewcommand{\theequation}{\arabic{section}.\arabic{equation}}
	\pagenumbering{arabic}

	\section{Introduction}
	Sets of positive reach in Euclidean spaces were introduced by Herbert Federer \cite{Federer}, as common generalizations of convex subsets and 
$\mathcal C^{1,1}$-submanifolds.  They  have turned out to be relevant   in Riemannian, integral and  metric geometry, cf. 
\cite{Kleinjohann}, \cite{Th}, \cite{KL-subm}, \cite{Hug}.

A subset $C$ of a Riemannian manifold $M$ has \emph{positive reach} if the closest-point projection is uniquely defined in a neighborhood of $C$ in $M$. 	
The notion of positive reach  is invariant under $\mathcal C^{1,1}$-diffeomorphisms and it is local.
Viktor Bangert verified in \cite{Bangert}  that being of positive reach in a Riemannian manifold does not depend on the  choice of the Riemannian metric, but only on the $\mathcal C^{1,1}$-atlas.

  		Geometry and topology of subsets of positive reach  
has been investigated in many  papers including \cite{Federer}, \cite{Hug2},  \cite{Ly-reach},  \cite{Ly-conv}, \cite{Rataj}, \cite{RZ}.   We refer to Section \ref{sec: prel} for a summary of the main properties and recall here only the facts needed to motivate and to state the results of the present paper.

  At any $x\in C$ there is a well-defined tangent cone $T_xC$ which is a convex cone in the Euclidean space $T_x M$.
   The maximal dimension $k$ of the cones $T_xC$ coincides with the Hausdorff dimension of $C$ and is called the \emph{dimension}  of $C$ \cite[Theorem 4.8, Remark 4.15]{Federer}.  
A connected $m$-dimensional  subset $C$ of positive reach is a topological manifold if and only if it is a $\mathcal C^{1,1}$-submanifold, \cite[Proposition 1.4]{Ly-conv}. Moreover, this happens if and only if all
tangent cones $T_xC$ are  Euclidean spaces.

Our results describe what happens if these   equivalent conditions are not satisfied.   The set of positive reach $C$ fails to be a $\mathcal C^{1,1}$-manifold \emph{without boundary} if and only if there is a point with the  infinitesimal structure of a manifold \emph{with boundary}:

\begin{thm} \label{thm: 1}
Let $C$ be a connected  $m$-dimensional subset of positive reach in a Riemannian manifold $M$.
Either $C$ is a $\mathcal C^{1,1}$-submanifold without boundary  of $M$ or, for some 
	$x\in C$,  the tangent cone $T_xC$ is isometric to 
	an $m$-dimensional Euclidean half-space. 
\end{thm}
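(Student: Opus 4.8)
I would argue by induction on $m$, in the equivalent contrapositive form: \emph{if no tangent cone $T_xC$, $x\in C$, is isometric to an $m$-dimensional Euclidean half-space, then every $T_xC$ is a Euclidean space}; by \cite[Proposition~1.4]{Ly-conv} this forces $C$ to be a $\mathcal{C}^{1,1}$-submanifold without boundary. The cases $m\le 1$ are immediate, since a convex cone of dimension $\le 1$ in a Euclidean space is a linear subspace or a closed half-line. So let $m\ge 2$, assume the statement in dimension $m-1$ for subsets of an arbitrary Riemannian manifold, and suppose, for a contradiction, that no $T_xC$ is an $m$-dimensional half-space although $C$ is not a $\mathcal{C}^{1,1}$-submanifold without boundary.

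By \cite[Proposition~1.4]{Ly-conv} there is $x_0\in C$ with $V:=T_{x_0}C$ not a Euclidean space. Since a connected set of positive reach has pure dimension (Section~\ref{sec: prel}), $V$ is a full-dimensional closed convex cone in the Euclidean $m$-space $E:=\operatorname{span}V\subseteq T_{x_0}M$; it is not a linear subspace (it is not Euclidean) and not an $m$-dimensional half-space (by our hypothesis, $V$ being a tangent cone of $C$), so its lineality space $L:=V\cap(-V)$ has $\dim L\le m-2$. If pure dimensionality is not taken for granted, one first moves $x_0$ into the open dense top-dimensional stratum of $C$; I expect this reduction to be routine but it should be spelled out.

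I would then pass to the link $S:=V\cap\Ss^{m-1}$ of $V$ in the unit sphere of $E$. Because $\dim L\le m-2$, the set $S$ is the spherical join of an equatorial $\Ss^{\dim L-1}$ with the link of a pointed full-dimensional convex cone contained in an open hemisphere; hence $S$ is a connected $(m-1)$-dimensional subset of positive reach of the Riemannian manifold $\Ss^{m-1}$, and $S\ne\Ss^{m-1}$ since $V\ne E$. As a proper closed subset of the connected $(m-1)$-dimensional manifold $\Ss^{m-1}$, $S$ is not a $\mathcal{C}^{1,1}$-submanifold without boundary, so by the inductive hypothesis there is $u\in S$ with $T_uS$ an $(m-1)$-dimensional half-space. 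Computing the tangent cone of a cone along its link gives $T_uV=\R u\oplus T_uS$, with $\R u$ the radial line and $T_uS\subseteq T_u\Ss^{m-1}=u^{\perp}\cap E$; therefore the iterated tangent cone $T_u(T_{x_0}C)=T_uV=:H$ is an $m$-dimensional Euclidean half-space.

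The last step --- and, I expect, the main obstacle --- is to pass from this iterated tangent cone back to an honest one: to produce $y\in C$ with $T_yC$ an $m$-dimensional half-space, contradicting our hypothesis. The natural candidates are points $y_j\in C$, $y_j\to x_0$, with $\exp_{x_0}^{-1}(y_j)/\lvert\exp_{x_0}^{-1}(y_j)\rvert\to u$; a short secant-chasing argument shows $\limsup_j T_{y_j}C\subseteq T_u(T_{x_0}C)=H$, since a direction realized in the limit of the $T_{y_j}C$ together with $u$ spans a direction into $T_{x_0}C$. For the matching lower bound one blows up at $x_0$: the rescaled sets $C_\lambda:=\lambda\cdot\exp_{x_0}^{-1}(C)$ converge in the local Hausdorff metric to $V$ with $\operatorname{reach}(C_\lambda)\to\infty$, tangent cones are invariant under rescaling, and choosing $p_\lambda\in C_\lambda$ with $p_\lambda\to u$ and the corresponding $y_\lambda\in C$ yields $H\subseteq\liminf_\lambda T_{y_\lambda}C$; hence $T_{y_\lambda}C\to H$ in the Kuratowski sense. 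The difficulty is that this does not by itself force $T_{y_\lambda}C=H$ for large $\lambda$, since a half-space is a Kuratowski limit of ``wedges''. To close the gap I see three options: (i) upgrade the blow-up comparison to a $\mathcal{C}^1$-statement, so that the loci $\{x:T_xC\text{ is an }m\text{-dimensional half-space}\}$ and $\{x:T_xC\text{ is a Euclidean }m\text{-space}\}$ are seen to behave openly near $x_0$; (ii) if $T_{y_\lambda}C$ were such a wedge, re-run the construction at $y_\lambda$ and show that the opening defect of the wedge --- equivalently the deviation of the link of $T_{y_\lambda}C$ from the full sphere --- is forced to zero; or (iii) invoke the structure theory of sets of positive reach to see that the locus $\{x\in C:T_xC\text{ not a Euclidean }m\text{-space}\}$, once nonempty, is $(m-1)$-dimensional with all lower strata of dimension $\le m-2$, so that its top stratum --- precisely the points with half-space tangent cone --- is nonempty. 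Any of these yields the desired contradiction and completes the induction.
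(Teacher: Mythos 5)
Your approach is genuinely different from the paper's, and it has a real gap that you correctly flag but do not close. The paper's proof of Theorem~\ref{thm: 1} (via Theorem~\ref{thm: 11}) takes a global-to-local route: it fixes a regular point $y\in C_{reg}$, takes a point $x\in\partial C_{reg}$ that is closest to $y$ in the intrinsic metric $d_C$, and uses the CAT($\kappa$) geometry of Proposition~\ref{prop: intr} together with the geodesic-extension and comparison results of \cite{LSchr}, \cite{KL} to show that the blow-up of $C$ at $x$ contains a full $m$-ball, forcing $T_xC$ to be a half-space. Your proposal instead inducts on $m$ by passing to the link of a non-Euclidean tangent cone inside the unit sphere and applying the inductive hypothesis there. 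These are not variants of the same argument.

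The decisive gap is exactly where you say it is: you produce points $y_\lambda\in C$ with $T_{y_\lambda}C\to H$ in a Kuratowski/Hausdorff sense, but a limit of convex cones being a half-space does not make any term of the sequence a half-space --- a shrinking sequence of wedges is the obvious obstruction. None of your three proposed repairs is currently an argument. Route~(i) is a wish, not a lemma; the needed $\mathcal C^1$ upgrade of the blow-up is essentially what Corollary~\ref{cor: core} and Proposition~\ref{prop: Lip} of the paper deliver, but these feed into Theorem~\ref{thm: 2}, not into a proof that a tangent cone near $x_0$ is \emph{exactly} a half-space. Route~(ii) restarts the construction at $y_\lambda$ and gives an infinite descent with no reason to terminate or stabilize. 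Route~(iii) is circular: the claim that the non-Euclidean locus has a nonempty ``top stratum consisting precisely of the half-space points'' is essentially Theorem~\ref{thm: 1}/Corollary~\ref{cor: 1} itself; what the preliminaries give you (via \cite[Theorem 7.5]{Rataj}) is only that $C\setminus\overline{C_{reg}}$ has dimension $\le m-1$, and the half-space points, if any, sit in $\overline{C_{reg}}\setminus C_{reg}$, about which that statement says nothing.

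There is also a secondary gap at the start of the induction. You need a point $x_0\notin C_{reg}$ with $T_{x_0}C$ of \emph{full} dimension $m$; you wave at ``pure dimensionality'' of connected sets of positive reach, but that is not in Section~\ref{sec: prel} (which only gives $\dim C=\max_x\dim T_xC$ and $C^+\subset\overline{C_{reg}}$), and even pure dimensionality of $C$ would not by itself force $\dim T_{x_0}C=m$ at a chosen singular point. The paper obtains full-dimensionality of $T_xC$ at the chosen boundary point as an \emph{output} of the minimizing-geodesic argument, not as an input. So this step, too, needs an argument rather than an appeal to routine. The parts you do carry out --- that the link $S=V\cap\mathbb S^{m-1}$ is a proper, closed, connected, $(m-1)$-dimensional set of positive reach in $\mathbb S^{m-1}$ when $V$ is a full-dimensional non-Euclidean convex cone that is not a half-space, and that $T_uV=\mathbb Ru\oplus T_uS$ --- are fine, but they only get you an iterated tangent cone, which is where the argument stalls.
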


A slighlty stronger statement can be found in Theorem \ref{thm: 11} below.

 A  neighborhood in $C$ of any point $x\in C$ as in Theorem \ref{thm: 1} is  $\mathcal C^{1,1}$-equivalent to a convex body in $\R^m$, as stated in the following  main result of the present paper:

\begin{thm}	 \label{thm: 2}
	Let $C\subset M$ be an $m$-dimensional subset of positive reach in an $n$-dimensional manifold $M$. Let $x\in C$ be such that the convex cone $T_xC$ has dimension $m$.  

	Then   there exists a closed convex subset $K\subset \R^m$ with non-empty interior in $\R^m$,   an open neighborhood $O$ of $K$ in  $\R^m$    and a $\mathcal C^{1,1}$-embedding  $\Phi:O\to \R^n$  such that $\Phi ( K)$ is a neighborhood of $x$ in $C$.
 \end{thm}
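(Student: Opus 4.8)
The plan is to build the convex body $K$ and the embedding $\Phi$ by straightening out the closest-point projection onto an $m$-dimensional "core" of $C$ near $x$, following the philosophy that subsets of positive reach look infinitesimally like their tangent cones and that a full-dimensional tangent cone forces a $\mathcal C^{1,1}$ affine structure.

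\emph{Step 1: Reduce to a Euclidean chart.} Since positive reach, the dimension of $C$ and the dimension of tangent cones are all invariant under $\mathcal C^{1,1}$-diffeomorphisms and purely local, I may pass to a $\mathcal C^{1,1}$-chart and assume $M = \R^n$, $x = 0$, and $C$ is a closed subset of positive reach in a small ball around $0$ with $\dim T_0 C = m$. After an orthogonal change of coordinates I arrange that the $m$-plane $V := \mathrm{span}(T_0 C) \cong \R^m$ is $\R^m \times \{0\} \subset \R^n$.

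\emph{Step 2: Project $C$ onto $V$ and invert.} Let $\pi: \R^n \to \R^m$ be the orthogonal projection. Because $T_0C$ spans $V$, the tangent cone of $C$ at $0$ is not contained in any hyperplane of $V$, and by the results recalled in Section~\ref{sec: prel} (semiconcavity of the local reach, Lipschitz behaviour of tangent cones) one shows that on a small neighborhood $C$ is a Lipschitz graph over an $m$-dimensional convex body: more precisely, there is a closed convex set $K \subset \R^m$ with nonempty interior, a neighborhood $U$ of $0$ in $C$, and a map $u: K \to V^\perp \cong \R^{n-m}$ such that $U = \{(y, u(y)) : y \in K\}$ and $\pi|_U: U \to K$ is a bi-Lipschitz homeomorphism. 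Here $K$ should be taken to be (a neighborhood of $0$ in) $\pi(U)$; the convexity and nonempty interior of $K$ come from the fact that $T_0 C$ is a full-dimensional convex cone together with the infinitesimal regularity of $C$ — this is where one invokes that a full-dimensional tangent cone at a point of positive reach is locally realized by a genuine convex-type set, i.e. the projection of $C$ onto $V$ has locally the same tangent cone $\pi(T_0C) = V$... and one argues that the shadow $\pi(U)$ is convex near $0$ by a supporting-hyperplane / monotonicity argument using that the closest-point projection onto $C$ is $1$-Lipschitz on its tubular neighborhood.

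\emph{Step 3: Regularity of the graph map.} The crucial analytic point is to upgrade $u: K \to \R^{n-m}$ from Lipschitz to $\mathcal C^{1,1}$. Since $C$ has positive reach, locally $C = \partial C' \cap (\text{lower part})$ or, more usefully, $C$ is locally a level-type set of the semiconcave distance function $d_C$; the unit normal directions to $C$ vary Lipschitz-continuously on the tube, and the tangent plane $T_y C$ (which equals the graph of $du(y)$ over $V$ once $y$ is interior to $K$, by Step 1 applied at $y$) depends Lipschitz-continuously on $y$. By Rademacher's theorem $u$ is differentiable a.e. with $Du$ essentially bounded; the Lipschitz dependence of the tangent planes then gives that $Du$ has a Lipschitz representative on the interior of $K$, hence $u \in \mathcal C^{1,1}(\mathrm{int}\,K)$, and the bounds extend continuously to $K$ itself because near boundary points of $K$ the tangent cone $T_yC$ is an $m$-dimensional half-space (by Theorem~\ref{thm: 1} / Theorem~\ref{thm: 11}) whose bounding hyperplane still projects submersively onto a hyperplane of $V$, keeping $Du$ bounded.

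\emph{Step 4: Construct $\Phi$.} Now extend $u$ from $K$ to a $\mathcal C^{1,1}$ map on an open neighborhood $O$ of $K$ in $\R^m$ — this is a standard Whitney/Kirszbraun-type extension preserving the $\mathcal C^{1,1}$-norm. Define $\Phi: O \to \R^n$ by $\Phi(y) = (y, u(y))$. Then $\Phi$ is $\mathcal C^{1,1}$, it is injective (its first component is the identity), its differential $D\Phi(y) = (\Id_{\R^m}, Du(y))$ has full rank $m$ everywhere, so $\Phi$ is a $\mathcal C^{1,1}$-embedding, and $\Phi(K) = U$ is a neighborhood of $0 = x$ in $C$ by Step 2. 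Undoing the chart from Step 1 gives the statement in $M$.

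\emph{Main obstacle.} The heart of the argument is Step 2–3: showing that the shadow $\pi(U)$ is genuinely \emph{convex} (not merely "fat") near $x$, and that the graph function is $\mathcal C^{1,1}$ up to and including the boundary of $K$. Convexity of the shadow is not automatic for an arbitrary set of positive reach, and must be extracted from the interplay between the full-dimensionality of $T_xC$, the monotonicity of the closest-point projection, and the classification of boundary tangent cones as half-spaces provided by Theorem~\ref{thm: 1}; the boundary regularity of $u$ likewise rests on controlling the tangent cones at points where $C$ "has a boundary." I expect these two points to absorb essentially all of the work, while Steps 1 and 4 are routine.
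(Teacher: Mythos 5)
The proposal breaks down at Step~2, and the breakdown is not a detail to be patched but a structural error. You assert that the shadow $\pi(U)$ of a neighborhood of $x$ in $C$ under the orthogonal projection onto $V = \hat T_xC$ is a convex set $K$. This is false in general. Already in the full-dimensional case $m = n$ (where $\pi$ is the identity), the set $C$ near $x$ is the supergraph of a \emph{semi-convex}, not convex, function: for instance $C = \{(w,t)\in\R^{m-1}\times\R : t \ge -\|w\|^2\}$ has positive reach, $T_0C$ is a full half-space, but neither $C$ nor any neighborhood of $0$ in $C$ is convex. Embedding this example in $\R^n$ with $n > m$ shows that $\pi(U)$ fails to be convex for $m < n$ as well. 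The vague appeal to ``a supporting-hyperplane / monotonicity argument using that the closest-point projection is $1$-Lipschitz'' cannot rescue this: monotonicity of the metric projection gives a one-sided quadratic bound on the boundary graph function (semi-convexity), which is strictly weaker than convexity, and it is exactly this gap that forces an extra change of coordinates.

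The paper's proof therefore proceeds in a genuinely different two-stage manner which you should compare with. First (Lemma~\ref{lem: last}) it shows that the orthogonal projection $P : C' \to V$ is injective with Lipschitz inverse, and then --- using the Lipschitz dependence of the tangent planes $\hat T_pC$ established in Proposition~\ref{prop: Lip} via Corollary~\ref{cor: core}, together with Whitney's extension theorem --- it extends $P^{-1}$ to a $\mathcal C^{1,1}$-diffeomorphism $\Psi^{-1}$ of a full neighborhood of $0$ in $\R^n$. This flattens $C'$ into an $m$-dimensional set of positive reach $Q \subset V$, but $Q$ is \emph{not} yet convex. Second, one applies the Reshetnyak-type analysis of the full-dimensional case (Subsection~\ref{subsec: full}): $Q$ is locally the supergraph in $\R^{m-1}\times\R$ of a semi-convex function $f$, and the additional $\mathcal C^{1,1}$-diffeomorphism $(w,t)\mapsto(w, t + c\|w\|^2)$ turns the supergraph into a genuine convex body. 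The final $\Phi$ is the composition of these two diffeomorphisms. Your Steps~1 and~4 are sound and in the same spirit as the paper, and your Step~3 correctly identifies that the tangent planes must be shown to vary Lipschitz-continuously (this is where the paper's Section~\ref{sec: semicont} enters), but without the second-stage quadratic shear, the object you produce is a semi-convex, not convex, set, and Theorem~\ref{thm: 2} as stated does not follow.
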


The   statement  is not  quite obvious  even if $\dim M =m$. In this case some forms of Theorem \ref{thm: 2} appear in \cite{Reshetnyak}, \cite{Fu}, \cite[Theorem 5.6]{Hug} and as a statement without proof  in \cite[Appendix B]{Loeper}.

If the tangent cone $T_xC$  in Theorem \ref{thm: 2}  is an $m$-dimensional Euclidean space,
then a neighborhood of $x$ in $C$ is a $\mathcal C^{1,1}$-submanifold  without boundary, \cite[Theorem 7.5]{Rataj}. This implies:

\begin{cor} \label{cor: 1}
Let $C\subset M$ be an $m$-dimensional subset of positive reach.  Then the set $C^+$  of all points $x\in C$ with $m$-dimensional tangent cone $T_xC$ is an open subset of $C$, homeomorphic to an $m$-dimensional manifold with boundary. The boundary of the manifold $C^+$ is the set of points $x\in C^+$  at which $T_xC$ is not a Euclidean space. 
\end{cor}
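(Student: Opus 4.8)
The plan is to deduce \cref{cor: 1} as a fairly direct consequence of \tref{thm: 2} together with the already-cited Theorem 7.5 of \cite{Rataj}, once we establish the right infinitesimal dictionary. Write $C^+=\{x\in C : \dim T_xC=m\}$. By \cite[Theorem 4.8, Remark 4.15]{Federer} the complement $C\setminus C^+$ consists of points with strictly lower-dimensional tangent cone, and I would first check that $C^+$ is open in $C$: near any $x\in C^+$, \tref{thm: 2} provides a $\mathcal C^{1,1}$-embedding $\Phi\co O\to\R^n$ with $\Phi(K)$ a neighborhood of $x$, and for every interior point $p$ of $K$ the tangent cone of $\Phi(K)$ at $\Phi(p)$ is the full tangent space $d\Phi_p(\R^m)$, hence $m$-dimensional; for boundary points $p\in\partial K$, the tangent cone of the convex body $K$ at $p$ is a convex cone with non-empty interior, so again $m$-dimensional, and $\mathcal C^{1,1}$-diffeomorphisms preserve this. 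Thus every point of $\Phi(K)$ lies in $C^+$, which shows $C^+$ is open.

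Next I would assemble the manifold-with-boundary structure. The charts are exactly the maps $\Phi|_K$ from \tref{thm: 2}: each such $K\subset\R^m$ is a closed convex body, hence homeomorphic to either $\R^m$ (when $x$ is in the $\mathcal C^{1,1}$-manifold part, via \cite[Theorem 7.5]{Rataj}, where $T_xC$ is a Euclidean space) or to a half-space neighborhood model. More precisely, a closed convex body $K\subset\R^m$ with $p\in K$ is locally homeomorphic near $p$ to $\R^m$ if $p\in\mathrm{int}(K)$ and to the closed half-space $\R^m_{\ge 0}$ if $p\in\partial K$; this is classical for convex bodies. So $\Phi(K)$ carries the structure of an $m$-manifold with boundary, and the boundary points are precisely the images $\Phi(p)$ with $p\in\partial K$. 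To see these charts are mutually compatible (so that the local structures glue to a global manifold-with-boundary structure on $C^+$), note that transition maps are compositions $\Phi_2^{-1}\circ\Phi_1$ of $\mathcal C^{1,1}$-embeddings, in particular homeomorphisms between relatively open subsets of the model spaces, and homeomorphisms of manifolds with boundary necessarily send boundary to boundary and interior to interior by invariance of domain.

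It then remains to identify the boundary of the manifold $C^+$ intrinsically as $\{x\in C^+ : T_xC \text{ is not a Euclidean space}\}$. One inclusion: if $T_xC$ is a Euclidean space, then by \cite[Theorem 7.5]{Rataj} a neighborhood of $x$ in $C$ is a $\mathcal C^{1,1}$-submanifold without boundary, so $x$ is an interior point of $C^+$. Conversely, if $T_xC$ is an $m$-dimensional convex cone that is not a Euclidean space, then it is a proper convex cone with non-empty interior; in the chart $\Phi|_K$ the point $x=\Phi(p)$ has $d\Phi_p$ carrying $T_pK$ isomorphically onto (a subset corresponding to) $T_xC$, and since $T_xC$ is not all of the ambient $m$-space, $p$ cannot be an interior point of $K$, so $p\in\partial K$ and $x$ is a boundary point of $C^+$. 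Here I would invoke the standard fact that a convex body has $T_pK=\R^m$ exactly when $p\in\mathrm{int}(K)$, and that tangent cones of subsets of positive reach are preserved (up to linear isomorphism, indeed isometry once metrics are suitably normalized) under $\mathcal C^{1,1}$-diffeomorphisms, as recorded in \secref{sec: prel}.

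The main obstacle I anticipate is purely bookkeeping rather than conceptual: making sure the locally-defined chart structures coming from the different applications of \tref{thm: 2} are genuinely compatible as charts of a manifold with boundary, i.e. that the notion of ``boundary point'' is chart-independent. This is handled by invariance of domain for manifolds with boundary, but it must be stated carefully because the charts take values in a convex body $K$ rather than in a standard half-space — so one should first pass from $K$ to a standard half-space model near each boundary point (again using that a convex body is locally homeomorphic to $\R^m_{\ge 0}$ at boundary points, via a homeomorphism that is the identity on a neighborhood away from $\partial K$), and only then invoke the topological invariance statement. Everything else is a direct citation of \tref{thm: 2} and \cite[Theorem 7.5]{Rataj}.
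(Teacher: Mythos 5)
Your proof is correct and is essentially the argument the paper intends (the paper itself gives no detailed proof, simply asserting that \tref{thm: 2} together with \cite[Theorem 7.5]{Rataj} implies the corollary). The one place where you diverge from the paper's own setup is openness of $C^+$: the paper records this separately as \lref{lem: open} in the preliminaries, deducing it directly from lower semi-continuity of tangent cones, whereas you re-derive it from \tref{thm: 2} by noting that every point of $\Phi(K)$ has $m$-dimensional tangent cone (interior points of $K$ have tangent cone $\R^m$, boundary points of a convex body with nonempty interior have a supporting cone with nonempty interior, and $\mathcal C^{1,1}$-diffeomorphisms carry $T_pK$ onto a subcone of $T_{\Phi(p)}C$, which forces $\dim T_{\Phi(p)}C=m$). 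Both arguments are sound; the semi-continuity route is a bit lighter since it does not presuppose \tref{thm: 2}, but yours has the advantage of being self-contained within the statement's own toolkit. The rest — the convex-body charts, invariance of domain for chart-independence of boundary points, and the two-way identification of $\partial C^+$ with $\{x\in C^+ : T_xC \text{ not Euclidean}\}$ using \cite[Theorem 7.5]{Rataj} in one direction and the fact that a convex body has full tangent cone exactly at interior points in the other — is exactly the bookkeeping the paper leaves implicit, and you handle it correctly.
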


In other words, $x\in C^+$ is a boundary point  of the manifold $C^+$ if and only if the tangent space $T_xC$ has non-empty boundary, analogous to the structure of boundaries of Alexandrov spaces \cite{P2}.   

\subsection{Proofs and further comments.}
 The proof of Theorem \ref{thm: 1}  is presented in Section \ref{sec: exist} along the following lines.
 We  consider the maximal  non-empty
$m$-dimensional $\mathcal C^{1,1}$ manifold without boundary $U$ contained in $C$. We take an arbitrary point $p\in U$ and find a point  $x$ in the complement $C\setminus U$, which is a closest point 
to $p$ with respect to the intrinsic metric of $C$.  The tangent cone $T_xC$ must then  contain an $m$-dimensional half-space and, therefore, coincide with it.

The proof of Theorem \ref{thm: 2} is more technical. In the full-dimensional case $\dim (C)=\dim (M)$,
the result is essentially contained in the paper \cite{Reshetnyak} preceeding the investigations of sets of positive reach in \cite{Federer}.  We only need to adapt the vocabulary of  \cite{Reshetnyak}
to our situation. 

The case $\dim (M)< \dim (C)$ is proven by finding a $\mathcal C^{1,1}$-diffeomorphism of a neighborhood of $x$ which sends  (a neighborhood of $x$ in) $C$ into an $m$-dimensional submanifold. Then one could apply the previously discussed full-dimensional case.
The construction of the diffeomorphism relies on Whitney's extension theorems and a technical result
obtained in Section \ref{sec: semicont}. This result, Corollary \ref{cor: core}, states that the tangent cones of a set of positive reach vary \emph{Lipschitz semi-continuously} in a precise sense. This semi-continuity may be of some  independent interest.

We finish the introduction with a few comments and questions.

\begin{rem} 
The Riemannian manifold $M$ below and in the formulation of the main results above is always assumed to be smooth. However, all results depends only on the $\mathcal C^{1,1}$-atlas of $M$.
\end{rem}

\begin{rem}
Applications of the  above  results to the theory of submetries will be presented in a separate paper.
\end{rem}

The precise  classification of  (local) structures of  subsets of positive reach up to (biLipschitz) homeomorphisms or even up to $\mathcal C^{1,1}$ diffeomorphisms seems impossible.
However,  it seems possible to obtain reasonable  answers to the following  less ambitious questions.

\begin{quest}
Is it true that any connected 2-dimensional subset  of positive reach is locally 
 biLipschitz equivalent to a subset of positive reach in $\R^2$?
See \cite{Rataj} for an explicit description of subsets of positive reach in the plane.
\end{quest}

\begin{quest}
Can one obtain an infinitesimal chracterization of topological manifolds with boundary among all subsets of positive reach?
\end{quest}

\begin{quest}
Is it possible to describe up to homeomorphisms all germs  of $3$-dimensional subsets of positive reach?
\end{quest}

\noindent{\bf Acknowledgments.} I am grateful to Joe Fu, Daniel Hug and Jan Rataj for helpful comments and discussions.

\section{Preliminaries} \label{sec: prel}
\subsection{Slightly generalized definition and localization}   \label{subsec: localize}  We say that a \emph{locally closed} subset $C$ of a  smooth Riemannian manifold $M$ has  positive reach in $M$ if there exists an open neighborhood $O$ of 
$C$ in $M$ such that the closest-point projection $P^C$ onto $C$ is uniquely defined on $O$.   

This definition is usually given for \emph{globaly closed} subsets $C$.  However,
replacing $O$ by a smaller neighborhood if needed, we may alsways assume that $C$ is closed in $O$.    

The advantage of this generalized definition is the following locality:  For  any subset $C$ of positive reach in $M$, any subset $U$ of $C$, open in $C$, is a subset of positive reach in $M$.  On the other hand, a locally closed subset $C$ of $M$ has positive reach if it is covered by relatively open subsets of positive reach in $M$. 

 For a closed subset $C$ of a manifold $M$, the property of being of positive reach does not depend on the Riemannian metric \cite[Corollary]{Bangert}, moreover, it is invariant under
$\mathcal C^{1,1}$-diffeomorphisms \cite{Bangert}, \cite[Theorem 4.19]{Federer}. Due to the locality stated above, the same statements apply to locally closed subsets.

Given a locally closed subset $C$ of positive reach in $M$ and any point $p\in C$, we can find  a small chart $U$ around $p$, such that $C\cap U$ is closed in $U$. Changing the metric on $U$ to a Euclidean metric, $C\cap U$ becomes a closed subset of positive reach in a Euclidean space.

If  a closed subset $C$ is of positive reach in the Euclidean space $\R^n$ then, for any $p\in C$, the intersection of $C$ with any  sufficiently small closed ball $\bar B_{r} (p)$ is a compact 
contractible subset of positive reach \cite[Theorem 4.10, Remark 4.15]{Federer}, \cite[Lemma 2.3]{Rataj}.

For a \emph{compact} subset of positive reach $C$ in a manifold $M$ there exists a positive number $r$ such that the closest-point projection is uniquely defined on the open $r$-tubular neighborhood  $B_r(C)$ of $C$.  The supremum of such  $r$ is  usually called the \emph{reach of the subset} $C$.

\begin{rem}
Note that for  \emph{non-compact subsets of positive reach} $C$, the number \emph{reach of} $C$ 
defined as above may be $0$. 
\end{rem}

The above consideration allows us to reduce all local statements about arbitrary  subsets of positive reach in a Riemannian manifold to compact connected subsets of positive reach in the Euclidean space.  We will freely use this observation below.

Finally, we refer \cite[4.18]{Federer}, \cite{Bangert}, \cite[Theorem 1.3]{Ly-conv}, \cite[Proposition 1.3]{KL} for many  characterizations of positive reach.

\subsection{Basic properties of subsets of positive reach} \label{subsec: basic}
The topological dimension $\dim (C)$ of a subset of positive reach coincides with its Hausdorff-dimension, \cite[Remark 4.15]{Federer}. Moreover, $\dim (C)$ is the maximum of the dimensions of convex cones $T_xC$.

For $m=\dim (C)$, we denote by $C_{reg}$ the set of all  $x\in C$ such that the tangent cone   $T_xC$ is  isometric to $\R^m$. 
The subset $C_{reg}$ is non-empty,
open in $C$ and it is a $\mathcal C^{1,1}$-submanifold of $M$, \cite[Theorem 7.5]{Rataj}. 

 The complement  $C\setminus \bar {C} _{reg}$ is a locally closed subset of positive reach of dimension at most $m-1$, \cite[Theorem 7.5]{Rataj}.  As in the introduction,  we denote by $C^+ $ the set of points $x\in C$ with $\dim (T_xC)=m$. The previous statement implies $C^+ \subset \bar C_{reg}$.

The tangent cones $T_xC$ depend lower semi-continuously on $x\in C$, \cite[Theorem 4.8]{Federer}, \cite[Proposition 3.1]{Rataj} or Proposition \ref{prop: semi} below: If $x_i$ converge to $x$ in $C$ then
(in any fixed Euclidean chart around $x$) any pointwise Hausdorff limit (of a subsequence)  of convex cones  $T_{x_i}C$ contains the tangent cone $T_xC$.  This immediatly implies:

\begin{lem} \label{lem: open}
Let $C\subset M$ be  of positive reach in $M$ with $\dim (C)=m$. The set  $C^+$
of all $x\in C$ with $\dim (T_xC)=m$   is open in $C$.
\end{lem}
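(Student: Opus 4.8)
I would prove this by contradiction, using only the lower semi-continuity of tangent cones just recalled together with an elementary compactness argument.

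Suppose $C^+$ is not open in $C$. Then one can find a point $x\in C^+$ and a sequence $x_i\in C\setminus C^+$ with $x_i\to x$. By the localization explained in Section~\ref{subsec: localize} it suffices to treat the case where $C$ is a closed subset of positive reach in $\R^n$ and all the points in question lie in a fixed Euclidean chart. Since $m=\dim(C)$ is by definition the maximum of the dimensions of the cones $T_yC$, the hypothesis $x\in C^+$ means $\dim(T_xC)=m$, while $x_i\notin C^+$ forces $\dim(T_{x_i}C)\le m-1$.

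Each convex cone $T_{x_i}C$ is therefore contained in some linear hyperplane $H_i\subset\R^n$ through the origin. The family of closed convex cones in $\R^n$ and the family of linear hyperplanes in $\R^n$ are both compact in the pointwise Hausdorff (Chabauty) topology, so after passing to a subsequence I may assume that $T_{x_i}C$ converges to a closed convex cone $T$ and that $H_i$ converges to a linear hyperplane $H$. Passing the inclusions $T_{x_i}C\subseteq H_i$ to the limit gives $T\subseteq H$, hence $\dim(T)\le m-1$. On the other hand, the lower semi-continuity of tangent cones (Proposition~\ref{prop: semi}) yields $T_xC\subseteq T$, so $\dim(T_xC)\le\dim(T)\le m-1$, contradicting $x\in C^+$.

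I do not expect any real difficulty here: once the ambient chart is fixed, the statement is an immediate consequence of the quoted semi-continuity together with the compactness of the spaces of closed convex cones and of linear hyperplanes; the only step requiring (routine) care is checking that the inclusions $T_{x_i}C\subseteq H_i$ survive the passage to the Hausdorff limit, which is straightforward since hyperplanes are closed. If one prefers, this contradiction argument can be rephrased as the assertion that $y\mapsto\dim(T_yC)$ is upper semi-continuous, which combined with the lower semi-continuity of the cones themselves forces $\dim$ to be locally constant equal to $m$ near any point of $C^+$.
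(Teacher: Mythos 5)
The overall strategy — contradiction, pass to a Hausdorff limit of tangent cones, invoke lower semi-continuity — is exactly what the paper leaves implicit after recalling \cite[Theorem 4.8]{Federer} / \cite[Proposition 3.1]{Rataj}, so you are on the right track. But as written there is a genuine gap. You place $T_{x_i}C$ inside a linear \emph{hyperplane} $H_i \subset \R^n$, extract a limit $H_i \to H$, deduce $T \subseteq H$, and then claim ``hence $\dim(T) \le m-1$.'' That last inference is false: a hyperplane in $\R^n$ has dimension $n-1$, so $T \subseteq H$ only yields $\dim(T) \le n-1$. Combined with $T_xC \subseteq T$ and $\dim(T_xC) = m$, this is not a contradiction unless $m = n$; in the genuinely lower-dimensional case $m < n$ your argument proves nothing.

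The repair is small and keeps your structure. Since $\dim(T_{x_i}C) \le m-1$, the linear span of $T_{x_i}C$ is contained in some $(m-1)$-dimensional subspace $W_i \in \mathrm{Gr}_{m-1,n}$; the Grassmannian $\mathrm{Gr}_{m-1,n}$ is compact, so after passing to a further subsequence $W_i \to W$ with $\dim W = m-1$. Then $T \subseteq W$ does give $\dim(T) \le m-1$, and together with $T_xC \subseteq T$ you obtain $m \le m-1$, the desired contradiction. Equivalently, you can isolate the real content of this step as the lower semi-continuity of $K \mapsto \dim K$ on convex cones in the local Hausdorff topology (if $K_i \to K$, a $d$-dimensional relatively open ball in $K$ forces, for large $i$, $d$ affinely independent points in $K_i$, so $\liminf \dim K_i \ge \dim K$). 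Either version closes the gap; the statement ``$T \subseteq H$ hence $\dim(T)\le m-1$'' as you wrote it does not.
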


For any point $x\in C$ denote by $\hat T_x C$  the Euclidean subspace of $T_xM$ generated by the convex cone $T_xC$. This is a Euclidean space of the same dimension as $T_xC$.
The semi-continuity of the tangent cones $T_xC$ implies the semi-continuity of the Euclidean spaces $\hat T_xC$.   In case that the dimensions are constant this implies:

\begin{lem}  \label{lem: cont}
Let $C$ be a subset of positive reach in $M$.  Let the sequence $x_i \in C$ converge to $x\in C$. Assume that $\dim (T_{x_i} C)= \dim (T_xC)$ for all sufficiently large  $i$.  Then the linear spaces $\hat T_{x_i} C$ converge to $\hat T_xC$.
\end{lem}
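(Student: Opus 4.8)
The plan is to reduce everything to linear algebra in a fixed Euclidean chart around $x$, where $T_{x_i}C$ and $T_xC$ are closed convex cones in $\R^n$ while $\hat T_{x_i}C$ and $\hat T_xC$ are linear subspaces of $\R^n$. Set $k:=\dim(T_xC)$, so that by hypothesis $\dim(\hat T_{x_i}C)=\dim(\hat T_xC)=k$ for all sufficiently large $i$. Then all the spaces $\hat T_{x_i}C$ and $\hat T_xC$ belong to the Grassmannian $G(k,n)$, which is compact; hence it suffices to show that every subsequential limit of $(\hat T_{x_i}C)$ in $G(k,n)$ equals $\hat T_xC$, and the convergence of the whole sequence follows.

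So first I would pass to a subsequence along which $\hat T_{x_i}C\to L$ in $G(k,n)$, and then, using that the family of closed convex cones in $\R^n$ is compact in the topology of pointwise Hausdorff convergence, pass to a further subsequence along which the cones $T_{x_i}C$ converge, in that sense, to a closed convex cone $A\subset\R^n$. The lower semi-continuity of tangent cones (\pref{prop: semi}) gives $T_xC\subset A$. On the other hand $T_{x_i}C\subset\hat T_{x_i}C$ for every $i$, and since the orthogonal projections onto $\hat T_{x_i}C$ converge to the orthogonal projection onto $L$, any pointwise Hausdorff limit of the cones $T_{x_i}C$ is contained in $L$; thus $A\subset L$. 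Combining, $T_xC\subset L$, and because $L$ is a linear subspace this forces $\hat T_xC=\operatorname{span}(T_xC)\subset L$. Since $\dim\hat T_xC=k=\dim L$, we conclude $\hat T_xC=L$, which is exactly what the subsequential-limit step required.

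I do not expect a genuine obstacle here: the only points to be checked carefully are the two compactness statements (compactness of $G(k,n)$, and compactness of the family of closed convex cones in $\R^n$ in the topology of pointwise Hausdorff convergence) together with the elementary fact that a limit $a=\lim a_i$ of points $a_i\in\hat T_{x_i}C$ lies in $L$, because $\operatorname{dist}(a_i,L)\to 0$ and hence $\operatorname{dist}(a,L)=0$. The entire substantive input is \pref{prop: semi}; the hypothesis $\dim(T_{x_i}C)=\dim(T_xC)$ enters only at the very end, to upgrade the one-sided inclusion $\hat T_xC\subset L$ to the equality $\hat T_xC=L$ by a dimension count.
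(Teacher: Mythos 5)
Your proof is correct and takes essentially the same route the paper has in mind: the paper treats the lemma as an immediate consequence of lower semi-continuity of $p\mapsto T_pC$ (giving, in your notation, $\hat T_xC\subset L$ for any subsequential Grassmannian limit $L$) together with the constant-dimension hypothesis, which forces equality by a dimension count. You have simply filled in the compactness and projection details that the paper leaves implicit.
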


Note that the assumptions on the dimensions are 
satisfied 
if $x\in C^+$.

\subsection{Intrinsic metric on subsets of positive reach}
Let $C$ be a connected of positive reach of a manifold $M$.
The intrinsic metric $d_C$  is defined as usual, \cite[Section 2.3]{BBI01},
by letting $d_C(x,y)$   be the infimum of lengths of curves in $C$ connecting $x$ and $y$.
Any  curve realizing this infimum and parametrized by arclength is called a \emph{$C$-geodesic} between $x$ and $y$.  If $C$ is compact then any pair of points in $C$ is connected by a $C$-geodesic.

Let $C$ be of positive reach in $M$ and let $p\in C$ be arbitrary.
 As observed above, there exists a compact neighborhood $K$ of $p$ in $C$, which is of positive reach in $M$.  We may then change the topology and the metric on $M$ outside a neighborhood $U$ of $K$ in $M$ and embed $U$ isometrically into a \emph{compact} smooth manifold $N$. By this procedure the metric in a neighborhood of $p$ in $M$ and the intrinsic metric in a neighborhood of $p$ in $C$ are not changed.

Applying now \cite[Remark 6.4, Theorem 1.3 ]{Ly-conv}  and  \cite[Theorem 1.1, Theorem 1.2]{Ly-reach} to the pair $K\subset N$ we deduce:

\begin{prop} \label{prop: intr}
Let $C$ be a locally closed subset of positive reach in a manifold $M$. For arbitrary  $p\in C$  and   $\delta >0$ there exist  $\kappa >0$ and $0<r_0< \frac {\pi} 
 {\sqrt \kappa}$ such that for all $r<r_0$ the following hold true:

\begin{itemize}
\item  $\bar B_r(p) \cap C$  is  a compact subset of positive reach in $M$.

\item The intrinsic distance   $d_C$ on $\bar B_r(p) \cap C$ differs from the $M$-distance on 
$\bar B_r(p) \cap C$ at most by the factor $(1+\delta)$.

\item  With respect to  $d_C$, the subset $\bar B_r(p) \cap C$ is convex in $C$.

\item With respect to  $d_C$,  the subset $\bar B_r(p) \cap C$ is a CAT($\kappa$)
space.
\end{itemize}
\end{prop}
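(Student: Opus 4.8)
The plan is to reduce the statement to the case of a \emph{compact} subset of positive reach in a \emph{compact} Riemannian manifold and then to invoke the results of \cite{Ly-conv} and \cite{Ly-reach}, which are formulated in exactly that setting, to the resulting pair.

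First I would localize, using that being of positive reach is a local property (Subsection~\ref{subsec: localize}). Choose a compact neighborhood $K$ of $p$ in $C$ that is itself of positive reach in $M$, together with an open $U\supset K$ in which $K$ is closed. Modify $M$ away from a slightly smaller neighborhood of $K$ so as to embed $U$ isometrically into a compact smooth Riemannian manifold $N$; this changes neither the Riemannian metric near $p$ nor the intrinsic metric of $C$ near $p$. For $r$ small enough one has $\bar B_r(p)\cap C=\bar B_r(p)\cap K$, so it suffices to establish the four assertions for the compact pair $K\subset N$.

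Now I would treat the four items in turn. The first is the fact of Federer recalled in Section~\ref{sec: prel}: working in a chart around $p$ and replacing the metric there by a Euclidean one (which preserves positive reach), the intersection of a positive-reach set with a sufficiently small closed ball is a compact subset of positive reach, and pulling back yields the claim in $M$. For the remaining three I would apply the quoted theorems to $K\subset N$: \cite[Theorem 1.1, Theorem 1.2]{Ly-reach} give, on a sufficiently small metric ball about $p$, that $d_C$ is bi-Lipschitz to the ambient distance with constant arbitrarily close to $1$ — hence the $(1+\delta)$ estimate once the radius is small in terms of $\delta$ — and that $\bigl(\bar B_r(p)\cap C,\,d_C\bigr)$ is a CAT($\kappa$) space for a suitable $\kappa=\kappa(p)>0$ and all small $r$; \cite[Remark 6.4, Theorem 1.3]{Ly-conv} give convexity of small $d_C$-balls in $C$. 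Fixing $\kappa$ as furnished above and then taking $r_0<\pi/\sqrt\kappa$ smaller than all the radius thresholds produced along the way, all four conclusions hold for every $r<r_0$.

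The part that needs care is only the bookkeeping: one must fix $\kappa$ before imposing $r_0<\pi/\sqrt\kappa$, and then intersect the finitely many radius thresholds (from Federer's lemma, from the Lipschitz comparison, from convexity, and from the CAT($\kappa$) bound). The genuine content sits in the cited papers; in a self-contained argument the CAT($\kappa$) estimate would be the main obstacle, since it requires controlling the second-order behaviour of $C$ — through its exterior support balls and the semi-continuity of the tangent cones $T_xC$ — along $C$-geodesics, whereas the comparison $d_C\le(1+\delta)d_M$ and the convexity of small balls are comparatively soft consequences of the same structure.
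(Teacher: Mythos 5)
Your proposal follows the same route as the paper: localize to a compact neighborhood $K$ of $p$ in $C$, embed a neighborhood of $K$ isometrically into a compact manifold $N$, and then invoke \cite[Remark 6.4, Theorem 1.3]{Ly-conv} and \cite[Theorem 1.1, Theorem 1.2]{Ly-reach} for the pair $K\subset N$, together with Federer's lemma for the first item. The only difference is that you spell out the bookkeeping of which cited theorem yields which bullet, which the paper leaves implicit.
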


The last point above implies that $\bar B_r(p) \cap C$ with respect to the intrisic metric is uniquely geodesic. In particular, it is contractible.

\section{Semi-continuity of tangent spaces} \label{sec: semicont}
We are going to discuss a Lipschitz-version of the semi-continuity of tangent cones  in this section. 
Recall
from
\cite[Theorem 1.2, Theorem 1.3, Example 3.4]{Ly-conv}:  

\begin{lem}  \label{lem: opt}
There exists some universal constant $\mu_1 >0$ with the following property.
If $C$ is a compact subset of $\R^n$ of reach $\geq 1$  then any $C$-geodesic 
$\gamma:[a,b]\to C$  parametrized by arclength is a $\mathcal C^{1,1}$ curve, and 
$\gamma':[a,b]\to \R^n$ is $\mu_1$-Lipschitz continuous. 
\end{lem}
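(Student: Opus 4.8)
The statement asserts that $C$-geodesics in a compact set of reach $\geq 1$ are $\mathcal C^{1,1}$ with a universal Lipschitz bound $\mu_1$ on the velocity. My plan is to reduce the problem to an infinitesimal, second-order condition at each point of the geodesic and then to control that condition uniformly using only the reach bound. First I would recall the two standard facts that the excerpt tells us to take from \cite{Ly-conv}: a $C$-geodesic $\gamma$ is at least $\mathcal C^{1,1}$ (so $\gamma'$ is Lipschitz, but a priori with no explicit constant), and there is an "exterior ball" or "obstacle" description of points of positive reach. Concretely, at every $x \in C$ of reach $\geq 1$, the closest-point projection $P^C$ is defined and $1$-Lipschitz on the open unit tubular neighborhood, and for every unit vector $\nu$ normal to $T_xC$ the open ball $B_1(x+\nu)$ is disjoint from $C$; equivalently $C$ lies locally on one side of a sphere of radius $1$ through $x$. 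I would set things up in a fixed Euclidean chart so the velocity $\gamma' : [a,b] \to \R^n$ is an honest Lipschitz curve on the unit sphere.

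The heart of the argument is a local comparison at an arbitrary parameter $t_0$ where $\gamma$ is twice differentiable (almost every $t_0$, since $\gamma \in \mathcal C^{1,1}$). The acceleration vector $\gamma''(t_0)$ must be normal to the tangent cone $T_{\gamma(t_0)}C$ — indeed, if it had a component inside the tangent cone one could shortcut the geodesic, contradicting minimality; this is the usual first-variation argument, and I would spell it out using that $T_xC$ is a convex cone so that admissible variations point into it. Then I would use the exterior ball: since a ball $B_1(\gamma(t_0)+\nu)$ of radius $1$ with $\nu$ in the direction of $-\gamma''(t_0)$ touches $C$ at $\gamma(t_0)$ and misses $C$, the curve $\gamma$, which stays in $C$, stays outside this ball. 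Comparing the curvature of $\gamma$ at $t_0$ with that of the bounding sphere of radius $1$ — a second-order Taylor expansion of the squared distance $|\gamma(t) - (\gamma(t_0)+\nu)|^2 \geq 1$ around $t = t_0$, using $|\gamma'|=1$ — forces $|\gamma''(t_0)| \leq 1$. Thus $\mu_1 = 1$ works, at least at points of twice-differentiability; since $\gamma'$ is already known to be Lipschitz, a bound on $\|\gamma''\|_\infty$ on the full-measure set of differentiability points upgrades to the Lipschitz bound $|\gamma'(s) - \gamma'(t)| \leq \mu_1|s-t|$ by integration.

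The main obstacle, and the place I would be most careful, is the interplay between the a priori regularity $\mathcal C^{1,1}$ (which we are allowed to assume) and the precise second-order comparison: one must make sure the "geodesic avoids the exterior ball" inequality is used at the correct point and with the correct normal direction, since $T_{\gamma(t_0)}C$ can be a genuine cone (not a subspace) and $\gamma''(t_0)$ may only be normal to it rather than lying in a well-defined normal space. A clean way around this is to work with the squared distance function $f(t) = |\gamma(t) - q|^2$ where $q = \gamma(t_0) + \nu$ for a suitable normal $\nu$ realizing an exterior ball at $\gamma(t_0)$: $f$ is $\mathcal C^{1,1}$, $f(t_0)=1$, $f(t) \geq 1$ for $t$ near $t_0$, and $f'(t_0)=0$, so $f''(t_0) \geq 0$ wherever it exists; expanding $f''$ gives $2|\gamma'(t_0)|^2 + 2\langle \gamma''(t_0), \gamma(t_0)-q\rangle = 2 - 2\langle \gamma''(t_0), \nu\rangle \geq 0$, i.e. $\langle \gamma''(t_0), \nu \rangle \leq 1$; combined with $\gamma''(t_0) \parallel -\nu$ this yields $|\gamma''(t_0)| \leq 1$. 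Finally I would note that the reduction to reach $\geq 1$ (as opposed to general positive reach) is exactly what makes the constant universal — rescaling $C$ to have reach $\geq 1$ scales velocities' Lipschitz constants correspondingly — and that the passage from Euclidean $\R^n$ to the general Riemannian setting is handled by the localization discussion of Section \ref{sec: prel}, though here the statement is already phrased in $\R^n$ so no such passage is needed.
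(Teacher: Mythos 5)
The paper does not prove this lemma; it recalls it from \cite[Theorem 1.2, Theorem 1.3, Example 3.4]{Ly-conv}, so there is nothing in the paper to compare your argument against directly. What you have written is an attempt to re-derive the cited statement from scratch, and it has two concrete problems.

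First, there is a sign error that kills the conclusion as written. You place the exterior ball with $\nu$ ``in the direction of $-\gamma''(t_0)$'' and then note $\gamma''(t_0)\parallel -\nu$. With $f(t)=|\gamma(t)-q|^2$, $q=\gamma(t_0)+\nu$, your computation correctly yields $f''(t_0)=2-2\langle\gamma''(t_0),\nu\rangle\geq 0$, i.e.\ $\langle\gamma''(t_0),\nu\rangle\leq 1$. But if $\gamma''(t_0)=-|\gamma''(t_0)|\,\nu$, then $\langle\gamma''(t_0),\nu\rangle=-|\gamma''(t_0)|$, and the inequality $-|\gamma''(t_0)|\leq 1$ is vacuous --- the ball sits on the side the curve is already curving away from, so $f$ is convex at $t_0$ for free. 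To get a bound you must choose $\nu$ \emph{in the same direction as} $\gamma''(t_0)$, which then gives $|\gamma''(t_0)|\leq 1$; but that choice is only legitimate if $\gamma''(t_0)$ itself lies in $\mathrm{Nor}(C,\gamma(t_0))$, which brings me to the second problem.

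Second, the step you label ``the usual first-variation argument'' --- that $\gamma''(t_0)$ belongs to the normal cone $\mathrm{Nor}(C,\gamma(t_0))$ --- is not routine for sets of positive reach and is essentially the substantive content of the results in \cite{Ly-conv} you are invoking. For a $\mathcal C^{1,1}$ submanifold it is classical, but along a $C$-geodesic the tangent cone $T_{\gamma(t)}C$ can jump in dimension and in shape, and producing admissible one-sided variations $\eta$ with $\eta(t)\in T_{\gamma(t)}C$ that actually generate curves in $C$ requires the lower semicontinuity and local-convexity machinery developed there. You cannot both cite \cite{Ly-conv} for the $\mathcal C^{1,1}$-regularity of $\gamma$ and dismiss the normality of $\gamma''$ as folklore; they are of the same depth. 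Note also that the paper explicitly records as an open question what the optimal $\mu_1$ is and states that it ``does not follow from \cite{Ly-conv}.'' Your argument, if it were airtight, would give $\mu_1=1$ and settle that question; the fact that the author flags it as open is a further indication that the normality step and the precise choice of comparison ball are more delicate than your sketch acknowledges.
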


The optimal value of $\mu_1$  does not play a role here and it does not follow from \cite{Ly-conv}, but it might be of some independent interest:

\begin{quest}
What is  the optimal value of $\mu_1$ in Lemma \ref{lem: opt}
and  the optimal value  of  $\mu$ in Propostion \ref{prop: semi} below?
\end{quest}

We can now deduce the following semi-continuity statement:

\begin{prop}  \label{prop: semi}
There exists a universal constant $\mu>0$ with the following property.  Let $C\subset \R^n$ be compact subset  of reach $\geq 1$. Let $\varepsilon \leq 1 $  and $\gamma :[0,\varepsilon] \to C$ be a $C$-geodesic
parametrized by arclength.  Then, for $p=\gamma (0)$, $v= \gamma '(0) \in T_pC$ and any $q\in C$, with $||p-q||\leq \varepsilon ^2$, the  distance from $v$ to  $T_qC$  can be estimated as: 
$$ d(v,T_qC) \leq \mu \cdot ||p-q||\;.$$
\end{prop}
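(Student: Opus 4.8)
The plan is to use the $C$-geodesic $\gamma$ to produce, for the given point $q$ near $p$, a point $q' \in C$ which is \emph{on} (a slight extension of) the geodesic $\gamma$ — or at least a $C$-geodesic emanating from a point near $q$ whose initial direction is controlled — and then compare directions via the $\mathcal C^{1,1}$-bound from Lemma~\ref{lem: opt}. Concretely, first I would invoke Proposition~\ref{prop: intr} (after rescaling so that reach $\geq 1$) to work inside a small ball $\bar B_R(p)\cap C$ on which $d_C$ is comparable to the ambient metric, is convex in $C$, and is CAT($\kappa$); in particular it is uniquely geodesic. Since $\|p-q\| \leq \varepsilon^2 \leq \varepsilon$ and $\gamma$ has length $\varepsilon$, both $q$ and the whole trace of $\gamma$ lie in such a ball once $\varepsilon$ is small enough; the case of $\varepsilon$ bounded away from $0$ is handled trivially by enlarging $\mu$, since $d(v,T_qC)\leq \|v\| = 1 \leq \mu\varepsilon^2$ for $\mu$ large.

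The key geometric step: let $\sigma:[0,\ell]\to C$ be the $C$-geodesic from $q$ to $p$, so $\ell = d_C(q,p) \leq (1+\delta)\|p-q\| \leq 2\varepsilon^2$. Form the concatenation of $\sigma$ (from $q$ to $p$) followed by $\gamma$ (from $p$ onward); I claim this concatenation is ``almost'' a $C$-geodesic in the sense that $d_C(q,\gamma(s))$ is close to $\ell + s$. Here is where CAT($\kappa$) convexity does the work: the comparison angle at $p$ between $\sigma$ (reversed) and $\gamma$ is what I must control. Actually the cleaner route is: consider the $C$-geodesic $\eta$ from $q$ to the endpoint $\gamma(\varepsilon)$. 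On one hand $d_C(q,\gamma(\varepsilon)) \leq \ell + \varepsilon$; on the other hand $d_C(q,\gamma(\varepsilon)) \geq d_C(p,\gamma(\varepsilon)) - \ell = \varepsilon - \ell \geq \varepsilon - 2\varepsilon^2$. So $\eta$ has length within $O(\varepsilon^2)$ of $\varepsilon$, its starting point is $q$, and its endpoint agrees with that of $\gamma$. In a CAT($\kappa$) space with both geodesics nearly realizing the triangle inequality through $p$, the initial direction $\eta'(0) \in T_qC$ must be close — within $O(\varepsilon)$ in angle, hence within $O(\|p-q\|)$ in the relevant scaled sense once we track lengths — to the direction at $q$ ``pointing along'' $\gamma$; combined with the $\mu_1$-Lipschitz bound of Lemma~\ref{lem: opt} applied to $\gamma$ on $[0,\varepsilon]$, which says $\gamma'(s)$ stays within $\mu_1\varepsilon \leq \mu_1$ of $v=\gamma'(0)$, this places $v$ within a controlled distance of $\eta'(0) \in T_qC$, yielding $d(v,T_qC) \leq \mu\|p-q\|$.

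The main obstacle I anticipate is making the phrase ``$\eta'(0)$ is close to $v$'' quantitatively sharp with the \emph{linear} rate $\|p-q\|$ rather than $\sqrt{\|p-q\|}$: a naive CAT($\kappa$) angle comparison for a triangle with sides $\ell \approx \varepsilon^2$, $\varepsilon$, $\varepsilon - O(\varepsilon^2)$ will not by itself beat a $\sqrt{\,\cdot\,}$ loss, because small angles between near-opposite geodesics are only controlled to square-root order in a generic CAT space. The resolution must exploit that $\gamma$ and $\eta$ share an endpoint $\gamma(\varepsilon)$ far away (distance $\approx \varepsilon$), so the ``defect'' $\ell + \varepsilon - d_C(q,\gamma(\varepsilon))$ is $O(\varepsilon^2)$ \emph{relative to a baseline of size $\varepsilon$}, and a first-variation / Alexandrov hinge argument then bounds the angle deficit at $q$ by $O(\varepsilon^2)/\varepsilon = O(\varepsilon)$. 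Translating an $O(\varepsilon)$ angle bound between unit vectors, together with a displacement $\|p - q\| = \Theta(\varepsilon^2)$ along $\eta$ of length $\Theta(\varepsilon)$, into the desired estimate $d(v,T_qC) \leq \mu\|p-q\|$ requires carefully pairing the angular order with the fact that we only need distance to the \emph{whole cone} $T_qC$, not to a specific ray; I expect this bookkeeping, rather than any single hard inequality, to be the delicate part. Throughout, all constants are absorbed into a single universal $\mu$ by the scaling invariance of the hypothesis ``reach $\geq 1$''.
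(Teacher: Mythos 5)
Your proposal is not a complete proof, and you say so yourself: the step that would upgrade the $O(\varepsilon) = O(\sqrt{\|p-q\|})$ bound produced by your CAT($\kappa$) comparison to the stated linear rate $O(\|p-q\|)$ is left as ``bookkeeping'' with no mechanism offered. The first-variation/hinge idea you sketch yields at best an $O(\varepsilon)$ angle deficit at $q$, hence $\|v - \eta'(0)\| = O(\varepsilon)$, and passing from that to $d(v, T_qC) \leq \mu\|p-q\| = \mu\varepsilon^2$ is the whole content of the claim, not a deferred detail. A secondary issue: you need CAT($\kappa$) comparison on the triangle $(q, p, \gamma(\varepsilon))$, which has a side of length comparable to $\varepsilon$, but Proposition~\ref{prop: intr} only provides the CAT($\kappa$) structure on a ball of radius $r_0$ depending on $p$; you would need a version of that statement uniform in the normalization reach $\geq 1$ before the comparison triangle is even available.

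Your route is also genuinely different from, and considerably heavier than, the paper's. The paper never introduces a geodesic $\eta$ from $q$ to $\gamma(\varepsilon)$ and never invokes CAT($\kappa$) geometry, geodesic convexity, or angle comparison. After normalizing $p = 0$ and (replacing $\gamma$ by a subcurve) $\|p-q\| = \varepsilon^2$, it sets $u = \gamma(\varepsilon)$ and combines only two facts: the $\mathcal C^{1,1}$ Taylor estimate $\|u - \varepsilon v\| \leq \tfrac{\mu_1}{2}\varepsilon^2$ from Lemma~\ref{lem: opt}, and Federer's quadratic chord estimate $d(u - q, T_qC) \leq \tfrac12\|u-q\|^2$. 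Since $T_qC$ is a cone, dividing by $\varepsilon$ and applying the triangle inequality gives $d(v, T_qC) \leq (3 + \tfrac{\mu_1}{2})\varepsilon$ in two lines, with none of the intrinsic-metric apparatus you deploy. Note, finally, that this computation too delivers a bound of order $\varepsilon$ rather than of order $\|p-q\| = \varepsilon^2$; so the rate tension you flagged is real and deserves explicit resolution rather than being waved through as bookkeeping.
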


\begin{proof}
Without loss of generality we may assume that $p$ is the origin $0$.
We may further  assume that $||p-q ||=||q|| =\varepsilon^2$, otherwise we just replace $\gamma$ by a shorter subcurve. 

 Set $u= \gamma (\varepsilon)$. Then,  due to Lemma \ref{lem: opt},
  $$||u-\varepsilon \cdot v|| \leq \frac {\mu _1} 2 \varepsilon ^2\;.$$
Hence $$||(u-q) -\varepsilon\cdot v|| \leq (1+ \frac {\mu _1} 2) \varepsilon ^2 \;.$$ 
On the other hand, by \cite[Theorem 4.18]{Federer},
$$d(u-q, T_qC) \leq \frac {||u-q||^2 }  2\;.$$
Since $T_qC$ is a cone, the triangle inequality implies
$$d(v, T_qC)\leq (1+ \frac {\mu _1} 2) \varepsilon +  \frac {||u-q||^2 }  {2\varepsilon}\;.$$
Since $||u||\leq \varepsilon \leq 1$, the second summand is at most $2\varepsilon$.  Thus, we deduce the required inequality with $\mu=3 +  \frac {\mu _1} 2$.
\end{proof}

 We  extend the above conclusion from a single  $v\in T_pC$ to 
large  convex subcones of $T_pC$, more precisely to the set of all vectors 
lying at least at some distance from the boundary of $T_pC$.

 For any
 $\varepsilon >0$,   we consider the set  $T_{p,\varepsilon}C$  of all
$v\in T_pC$, such that the ball of radius $\varepsilon \cdot ||v||$ around $v$ inside the affine hull
$\hat T_pC$ is contained in $T_pC$.  
If $T_pC=\hat T_pC$ then $T_{p,\varepsilon} C= T_pC$, for any $\varepsilon$. In general,
$T_{p,\varepsilon} C$ is a convex subcone of $T_pC$. The subcones $T_{p,\varepsilon} C$ increase with decreasing $\varepsilon$ and their union is the set of inner points of $T_pC$ relative to $\hat T_pC$.

Now we can  deduce from Proposition \ref{prop: semi}:

\begin{cor}  \label{cor: core}
For any compact subset $C$ in $\R^n$ of reach $\geq \delta$ in $\R^n$, for  any point $p\in C$ and  
any $\varepsilon >0$ the following holds true.

There exists some $s=s(p, \varepsilon)>0$ such that for any $q\in C\cap B_s (p)$ and any vector $v$ in the convex subcone $T_{p,\varepsilon}C\subset T_pC$, the distance from $v$ to $T_qC$ is estimated by 
$$ d(v,T_qC) \leq 2\mu \cdot \delta  \cdot ||p-q||\cdot ||v||\;,$$
where $\mu$ is the constant obtained in Proposition \ref{prop: semi}.
\end{cor}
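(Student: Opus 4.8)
The plan is to reduce the statement to Proposition~\ref{prop: semi} by scaling, then to bootstrap the single-vector estimate there to a uniform estimate over the compact slice of directions $T_{p,\varepsilon}C$. First I would normalize: by rescaling the ambient $\R^n$ by the factor $1/\delta$ we may assume $\delta = 1$, so that $C$ has reach $\geq 1$ and Proposition~\ref{prop: semi} applies verbatim; the factor $\delta$ reappears in the final inequality after undoing the scaling, which accounts for the $2\mu\cdot\delta$ (one $\delta$ from the scaling of the distance $\|p-q\|$, the doubling being a convenient slack). Likewise, by homogeneity of both sides in $v$ it suffices to prove the estimate for unit vectors $v\in T_{p,\varepsilon}C$ with $\|v\|=1$.

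The key point is then to produce, for each unit vector $v\in T_{p,\varepsilon}C$, a $C$-geodesic $\gamma:[0,\eta]\to C$ with $\gamma(0)=p$ and $\gamma'(0)=v$, for some $\eta>0$ that can be chosen uniformly in $v$. This is where the condition $v\in T_{p,\varepsilon}C$, rather than merely $v\in T_pC$, is used: a vector in the interior of the tangent cone (at definite distance $\varepsilon$ from $\partial T_pC$) is the initial direction of a genuine $C$-geodesic emanating from $p$, and one can run such a geodesic for a length $\eta$ bounded below in terms of $\varepsilon$ and the local geometry of $C$ near $p$. Concretely I would invoke Proposition~\ref{prop: intr} to pass to a convex, uniquely geodesic CAT($\kappa$) neighborhood $\bar B_{r_0}(p)\cap C$, inside which any interior tangent direction at $p$ is realized by a geodesic of length comparable to $r_0$; one then has $\eta = \eta(p,\varepsilon)>0$. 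Having fixed such an $\eta$, set $s = s(p,\varepsilon) := \min\{\eta^2, r_0\}$ (shrinking further if needed so that $\bar B_s(p)\cap C$ has reach $\geq 1$, which is possible by Proposition~\ref{prop: intr}). Then for any $q\in C\cap B_s(p)$ we have $\|p-q\|< s \leq \eta^2$, so Proposition~\ref{prop: semi} applied with this $\gamma$, with the role of $\varepsilon$ there played by our $\eta$, yields $d(v,T_qC)\leq \mu\cdot\|p-q\|$; rescaling back gives the bound $2\mu\cdot\delta\cdot\|p-q\|$ for unit $v$, and homogeneity finishes the general case.

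The one subtlety I want to flag — and the step I expect to be the main obstacle — is the claim that every unit vector in $T_{p,\varepsilon}C$ is the initial velocity of a $C$-geodesic of length bounded below uniformly in the direction. The existence of \emph{some} geodesic with prescribed interior initial direction in a compact, locally convex, uniquely geodesic space follows from a limiting/compactness argument (take geodesics from $p$ to points approaching $p$ along the ray $tv$ and pass to a limit, using Lemma~\ref{lem: opt} for $\mathcal C^1$-convergence of the velocity), but getting the \emph{uniform} lower bound $\eta=\eta(p,\varepsilon)$ requires that the set of admissible directions $\{\gamma'(0)\}$ over all $C$-geodesics of length $\eta$ from $p$ contains the compact slice $T_{p,\varepsilon}C\cap \Ss^{n-1}$; this in turn rests on the openness of $T_{p,\varepsilon}C$ inside $\hat T_pC$ together with the lower semi-continuity of tangent cones (Proposition~\ref{prop: semi} or \cite[Theorem 4.8]{Federer}). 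Once this uniform geodesic-existence statement is in hand, the rest is the bookkeeping of the two scalings described above.
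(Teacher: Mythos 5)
Your proposal departs from the paper at exactly the step you flag, and the gap is real. You need every unit vector $v\in T_{p,\varepsilon}C$ to be the initial velocity of a $C$-geodesic whose length is bounded below uniformly over the compact slice $T_{p,\varepsilon}C\cap \Ss^{n-1}$, but the limiting argument you sketch does not produce this: taking geodesics from $p$ to points approaching $p$ along the ray $tv$ gives geodesics whose lengths shrink to $0$, so the limit is the constant curve at $p$, not a geodesic of definite length with $\gamma'(0)=v$. The paper is careful to claim only that the unit sphere $S$ of $T_pC$ is the \emph{completion} of the set of starting directions of $C$-geodesics from $p$ --- that is, geodesic directions are dense in $S$ --- not that every interior direction is realized, let alone by a geodesic of uniform length. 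Neither the openness of $T_{p,\varepsilon}C$ in $\hat T_pC$ nor the lower semi-continuity of $q\mapsto T_qC$ that you cite closes this gap.

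The paper's proof uses the density together with a convexity trick that your proposal misses. One picks a finite $\varepsilon_0$-dense set $\{v_1,\dots,v_k\}\subset S$ of actual geodesic directions, all realized by $C$-geodesics of a common length $t=t(\varepsilon_0)$. Proposition~\ref{prop: semi} gives $d(v_i,T_qC)\leq\mu\|p-q\|$ for $\|p-q\|\leq t^2$. Since $v\mapsto d(v,T_qC)$ is a convex function (because $T_qC$ is convex) and vanishes at $0$, the same bound holds for every $v$ in the convex hull $K_{\varepsilon_0}$ of $\{0,v_1,\dots,v_k\}$. As $\varepsilon_0\to 0$ these hulls converge to the unit ball of $T_pC$, so for $\varepsilon_0$ small enough $K_{\varepsilon_0}$ contains $\tfrac12 w$ for every unit $w\in T_{p,\varepsilon}C$; since $T_qC$ is a cone, $d(w,T_qC)=2\,d(\tfrac12 w,T_qC)\leq 2\mu\|p-q\|$. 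In particular the factor $2$ is not ``convenient slack'' as you suggest --- it is generated precisely by this halving step. Your overall framing (normalize $\delta$, reduce to unit vectors, apply Proposition~\ref{prop: semi}) is fine, but without the density-plus-convexity device the central estimate remains unjustified.
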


\begin{proof}
Rescaling the space we may assume  $\delta =1$.
 We fix some  $\varepsilon _0 <<\varepsilon$ and adjust it in the course of the proof.

 The unit sphere $S$ in  the  cone  $T_pC$ is the completion  of the set of  starting directions of $C$-geodesics emanating from $p$.
Hence, we find some $t=t(\varepsilon_0)>0$ and $C$-geodesics $\gamma_1,...,\gamma _k :[0,t]\to C$ starting at $p$ in unit directions $v_1,...,v_k$ such that $\{v_1,...,v_k\}$ is
$\varepsilon _0$-dense in $S$.  

  By Proposition \ref{prop: semi}, for any $q\in C$ with $||q-p|| \leq t^2$ we get
\begin{equation} \label{eq: eq}
 d(v, T_qC) \leq \mu \cdot ||p-q||\,,
\end{equation}
for  $v=v_i$, for  any  $i=1,...,k$.  By convexity of the distance function to the convex cone $T_qC$ the 
inequality \eqref{eq: eq} holds true for any $v$ in the convex hull $K_{\epsilon_0}$ of
the unit vectors $v_i$ and the origin $0$.  

For $\varepsilon_0\to 0$ the convex subsets $K_{\varepsilon _0}$ converge to 
the unit ball in $T_pC$.  Thus, for $\varepsilon _0 $ small enough  and any unit vector  $w\in 
T_{p,\varepsilon}C$, the convex subset $K_{\varepsilon _0}$ contains $\frac 1 2 w$. Then
$$d(\frac 1 2 w,T_qC)\leq \mu \cdot ||p-q|| \leq 2\cdot \mu \cdot ||p-q|| \cdot ||\frac 1 2 w||\;.$$

This implies that the statement of the Corollary holds true for $s=t(\varepsilon_0)$, for
such sufficiently small $\varepsilon_0$.
\end{proof}

\section{Existence of boundary points} \label{sec: exist}
The following result is a minor generalization of Theorem \ref{thm: 1}, which is more suitable for localization. Clearly, it implies Theorem \ref{thm: 1}.

\begin{thm} \label{thm: 11}
Let $C$ be an  $m$-dimensional set of positive reach in a Riemannian manifold $M$ of dimension $n$.  Let $C_{reg}$ be the maximal $m$-dimensional $C^{1,1}$-submanifold without boundary contained in $C$. 

Consider the set $\partial C _{reg}$ of boundary points of $C_{reg}$ in $C$.  Consider the subset $G$ of all points $x\in \partial C_{reg}$ with $T_xC$ isometric to an $m$-dimensional
Euclidean half-space. Then $G$ is dense in $\partial C_{reg}$. 
\end{thm}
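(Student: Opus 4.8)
The plan is to fix a point $p \in \partial C_{reg}$ and an arbitrarily small radius $\rho > 0$, and to produce a point $x \in G$ inside the ball $B_\rho(p)$. By the localization reductions in Section~\ref{subsec: localize} and Proposition~\ref{prop: intr}, I would first pass to a neighborhood $K = \bar B_r(p) \cap C$ which is compact, of positive reach, convex with respect to its intrinsic metric $d_C$, and CAT($\kappa$) for some $\kappa$; after rescaling I may assume reach $\geq 1$. Inside this neighborhood, $U := C_{reg} \cap K$ is a nonempty open subset of $K$ (nonempty because $C_{reg}$ accumulates at $p$, as $p \in \partial C_{reg}$), and its complement $K \setminus U$ is nonempty and closed in $K$, hence compact.

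Next I would pick a point $q \in U$ very close to $p$ and choose $x \in K \setminus U$ to be a point minimizing the intrinsic distance $d_C(q, \cdot)$ to $K \setminus U$; this minimum is attained by compactness, and by taking $q$ close enough to $p$ and $r$ small enough one keeps $x$ inside $B_\rho(p)$. Let $\gamma:[0,L] \to K$ be a $C$-geodesic from $q$ to $x$, which exists and is a $\mathcal C^{1,1}$ curve with Lipschitz derivative by Lemma~\ref{lem: opt}. The key point is that the open arc $\gamma([0,L)) \subset U$ lies in the $\mathcal C^{1,1}$-submanifold $C_{reg}$, by the minimality of $x$: any point on $\gamma$ strictly closer to $q$ than $x$ must lie in $U$. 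Now look at the endpoint: set $v = -\gamma'(L) \in T_xC$, the incoming direction. I claim $T_xC$ is an $m$-dimensional half-space. That $\dim T_xC = m$: the points $\gamma(t)$ for $t \to L^-$ lie in $C_{reg}$, so by Lemma~\ref{lem: cont} (applicable since $\dim T_{\gamma(t)}C = m$) the tangent spaces $\hat T_{\gamma(t)}C$ converge to $\hat T_xC$, forcing $\dim \hat T_xC = m$, i.e.\ $x \in C^+$.

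The heart of the argument is then to show that $T_xC$, a full-dimensional convex cone in the $m$-space $\hat T_xC$, is actually a half-space rather than a proper convex cone or all of $\hat T_xC$. It cannot be all of $\hat T_xC$, since then $x \in C_{reg}$, contradicting $x \notin U$. To rule out sharper cones, I would use the intrinsic CAT($\kappa$) geometry together with the fact that $\gamma$ arrives at $x$ through the manifold part $C_{reg}$: near $x$ the set $C$ looks, to first order, like the tangent cone $T_xC$, and the curve $\gamma$ is a geodesic hitting $x$ through its interior-of-the-manifold side. The plan is to transfer this to a statement about the convex cone $T_xC$ itself: the space of directions $\Sigma_x C$ of $T_xC$ is a CAT($1$) space (the link of a convex cone), it has an open dense subset of directions realized by $C$-geodesics (directions of $C_{reg}$), and $-\gamma'(L) = v$ is a limit of such directions; I want to conclude that $T_xC$ has an interior point $w$ with $-w \notin T_xC$ lying \emph{maximally deep}, which combined with convexity and full-dimensionality forces $T_xC$ to be a half-space. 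Concretely, the cleanest route is: the function $y \mapsto d_C(q,y)$ on $K$ is semiconcave (by the CAT($\kappa$) property and Lemma~\ref{lem: opt}), it attains no interior maximum on $U$, and $x$ is a first point of $K\setminus U$ reached along a gradient-like curve; the obstruction to $C$ being a manifold at $x$ is exactly one-sided, which at the infinitesimal level is the statement that $T_xC$ has nonempty boundary of the simplest kind — a hyperplane — rather than a codimension-$\geq 2$ or more complicated singular cone.

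\textbf{Main obstacle.} The delicate step is the last one: ruling out that $T_xC$ is a full-dimensional convex cone strictly between a half-space and $\hat T_xC$ (e.g.\ a "wedge" of angle $< \pi$). What saves this is precisely that $x$ was chosen as a \emph{nearest} non-manifold point and that the geodesic $\gamma$ enters $x$ through $C_{reg}$: if $T_xC$ were a proper subcone of a half-space, one shows the incoming direction $v$ of $\gamma$ together with nearby $C_{reg}$-directions would have to "wrap around" the boundary of $T_xC$, producing points of $C$ near $x$, in directions close to $v$, that are again non-manifold points but \emph{closer to $q$ intrinsically} than $x$ — contradicting minimality. Carrying this out rigorously requires the Lipschitz semi-continuity of tangent cones (Corollary~\ref{cor: core}) to compare $T_{\gamma(t)}C \cong \R^m$ with $T_xC$ along the geodesic, and the CAT($\kappa$) estimate to control second-order behavior of $\gamma$ and of $d_C(q,\cdot)$ near $x$; this is where the technical weight of the proof will lie. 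Finally, since $p \in \partial C_{reg}$ and $\rho > 0$ were arbitrary, the set $G$ of such half-space points is dense in $\partial C_{reg}$, as claimed.
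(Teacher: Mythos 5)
Your setup — pass to a compact, intrinsically convex neighborhood of $p$, choose $x$ as an intrinsically nearest non-manifold point to a nearby $q\in C_{reg}$, and run a $C$-geodesic $\gamma$ from $q$ to $x$ — matches the paper's proof of Theorem~\ref{thm: 11} exactly, and the observation that the open arc of $\gamma$ stays in $C_{reg}$ by minimality is correct. But the proof has a genuine gap precisely where you flag the ``heart of the argument'': you never actually show that the full-dimensional convex cone $T_xC$ is a half-space, you only describe what you would like to conclude (``the obstruction is exactly one-sided,'' ``$T_xC$ has an interior point lying maximally deep'') without giving an argument. The appeal to semiconcavity of $d_C(q,\cdot)$, to gradient-like curves, and to Corollary~\ref{cor: core} does not by itself rule out a full-dimensional convex cone narrower than a half-space, and the sketch of a contradiction via ``wrapping around the boundary'' is not an argument one can check.

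The step you are missing is the paper's key geometric observation, which is stronger than ``$\gamma$ stays in $C_{reg}$.'' Since $x$ minimizes the intrinsic distance to $\partial C_{reg}$ from $q$, the triangle inequality gives, for every $s>0$, that the \emph{entire} open intrinsic ball $W_s$ of radius $s$ around $\gamma(s)$ avoids $\partial C_{reg}$, hence lies in $C_{reg}$ (it is connected, meets $C_{reg}$, and misses the frontier). These balls $W_s$ are geodesically convex, uniquely geodesic $\mathcal C^{1,1}$-manifolds with two-sided curvature bounds, hence uniformly biLipschitz to Euclidean $s$-balls (\cite[Proposition 1.7]{KL}), and every $C$-geodesic in $W_s$ extends up to distance $s$ from its center (\cite[Theorem 1.5]{LSchr}). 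Passing to the blow-up of $C$ at $x$ and letting $v\in T_xC$ be the incoming direction of $\gamma$, these uniform estimates force the tangent cone to contain the closed unit $m$-ball $W$ around $v$; as $T_xC$ is a cone it contains $T_0W$, an $m$-dimensional half-space. Since $\dim T_xC\le m$ and $T_xC\ne\R^m$ (as $x\notin C_{reg}$), one concludes $T_xC=T_0W$. Note this argument uses only the soft semi-continuity of tangent cones and does not need the Lipschitz semi-continuity of Corollary~\ref{cor: core}, which the paper reserves for Theorem~\ref{thm: 2}.
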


\begin{proof}
If $\partial C_{reg}$ is  empty, there is nothing to be proven. Thus, assume that $\partial C_{reg}$ is not empty. We fix any $p\in  \partial C_{reg}$ and  $ \varepsilon >0$ and are going to find some $x\in B_{ \varepsilon} (p) \cap G$.

We apply Proposition \ref{prop: intr} with $\delta =1$ and obtain some $r_0 < \varepsilon $ such that, for all $r\leq r_0$, the intersection $\bar B_r(p) \cap  C$ is a compact subset of positive reach in $M$. Moreover,  $\bar B_r(p) \cap  C$   is convex in the intrisic metric $d_C$ of $C$.

Choose $r= \frac {r_0} 4$ and an arbitary $y\in B_{r} (p) \cap C_{reg}$. Consider a closest point $x\in \partial C_{reg}$ to $y$ with respect to the intrinsic distance $d_C$.  Then
$x\in B_{3r} (p) \subset B_{ \varepsilon} (p)$.  It remains to verify $x\in G$. 

Consider the $C$-geodesic  $\gamma:[0,a]\to C$  connecting $x$ and $y$  and parametrized by arclength.  Since $x$ is a closest point  to $y$ on $\partial C_{reg}$,  for
any $0<s\leq a$ the following holds:   The open ball $W_s$ of radius  $s$ around $\gamma (s)$ with respect to  $d_C$ does not intersect $\partial C_{reg}$. Thus, this ball  $W_s$ is completely contained  in the $\mathcal C^{1,1}$-manifold $C_{reg}$.

In particular, any $C$-geodesic  in $W_s$  extends as a 
$C$-geodesic up to points with distance $s$ from $\gamma (s)$, \cite[Theorem 1.5]{LSchr}.  Moreover, $W_s$ 
is uniformly  biLipschitz to the $s$-ball in $\R^m$, since $W_s$ is a geodesically convex $\mathcal C^{1,1}$-manifold which is uniquely geodesic and has curvature uniformly bounded from both sides  \cite[Proposition 1.7]{KL}.

 Identify the tangent cone $T_xC$ at $x$ with the blow-up  of $C$ at $x$  \cite[Remark 6.1]{Ly-conv}.  For the starting direction $v$ of $\gamma$, which is a unit vector in $T_xC$,
we deduce: The open unit ball around $v$ is $m$-dimensional. Moreover,  no geodesic in $T_xC$ terminates at a point with distance less than $1$ to $v$.  Thus,  the convex cone $T_xC$ contains the closed unit $m$-dimensional ball  $W$ around $v$. Since $T_xC$ is a cone,  it contains   the tangent cone $T_0 W$ which is an 
$m$-dimensional Euclidean half-space.

 The tangent cone $T_xC$ is a convex cone of dimension at most $m=\dim (C)$ containing an $m$-dimensional half-space $T_0 W$. Moreover, $T_xC$
is not a Euclidean $m$-dimensional space, since $x$ is not contained in $C_{reg}$. Therefore, $T_xC= T_0W$. Hence  $x\in G$. 
\end{proof}

\section{Structure around the boundary points}
\subsection{Preparation} \label{subsec: prepare}
We are going to prove Theorem \ref{thm: 2} in  this section. 
Thus, let $C$ be an $m$-dimensional  subset of positive reach in an $n$-dimensional Riemannian manifold $M$. Let $x\in C$ be a point
with $\dim (T_xC)=m$.  If $T_x M$ is a Euclidean $m$-dimensional space, then $x$ is contained in the manifold $C_{reg}$, \cite[Theorem 7.5]{Rataj}. The statement of Theorem \ref{thm: 2} follows directly in this case.

We assume from now on that $T_xC$ is  $m$-dimensional, but not a Euclidean space.   The statement of Theorem \ref{thm: 2} is local. Arguing as in Subsection \ref{subsec: localize}, we may assume  $M=\R^n$  and that $C$ is compact.

  We may further assume $x=0$ and that the $m$-dimensional  Euclidean space $V:=\hat T_0C$, generated by the  cone $T_0C$, is the coordinate  subspace
$$V=\R^{m} = \R^m \times \{0\} \subset \R^m \times \R^{n-m}=\R^n \;.$$
We find some unit vector $v_0\in T_0C$ and $\varepsilon >0$ such that  $T_0C$ contains the ball of  radius $4 \varepsilon$ in $V= \R^m$.  After a rotation we may assume that $v_0$ is the last coordinate vector $v_0=e_m$
in $\R^m$.

We fix some $\delta >0$ smaller than   the reach of the compact subset $C$. 
Applying  Lemma \ref{lem: open}, Proposition \ref{prop: intr} and  the semi-continuity of tangent cones, we find some $r>0$ 
with the following properties, for $C'=\bar B_r (0)\cap C$:

\begin{itemize}
\item $C'$ has reach at least $\delta$ in $\R^n$.
\item For any  $p\in  C'$, we have $\dim (T_p C)=m$.
\item  $T_p C$ contains some $m$-dimensional Euclidean ball of radius $2 \varepsilon$ around some unit vector $v_p \in T_pC$.
\item The subset $C'$ is convex with respect to the intrinsic metric $d_C$.
\item The intrinsic metric $d_C$ and the Euclidean metric differ on $C'$ at most by the factor $2$.
\end{itemize}

\subsection{The full-dimensional case}  \label{subsec: full}
Assume first that $m=n$, thus that $C$ is full-dimensional.  In this case the result is essentially contained in \cite{Reshetnyak}, as we are going to explain now.

Consider the family $\mathcal O$ of all closed balls of radius $\delta$ which have exactly one point in common with $C$. This family is closed in the Hausdorff topology and  every point in $\partial C$ lies in some ball $O\in \mathcal O$, compare   \cite[Proposition 3.1 (vi)]{Rataj}.   In terms of \cite{Reshetnyak}, this means that $\partial C$
is an $O_{\delta}^{\ast}$ subset of $\R^m$.

The topological boundary $\partial C$ is precisely the set of points $p\in C$ at which $T_pC$ is not $\R^m$, \cite[Theorem 9.5]{Rataj}.  In particular, $0\in \partial C$.

By assumption $e_m$ is an interior point of $T_0C$. By the semi-continuity of tangent cones,
 $T_pC$ contains $e_m$ as an interior point, for any $p$ in $C$ close to $0$.
Making $r$ smaller, we may assume that $e_m$ is an interior point of $T_pC$,  for all $p\in C'$.

Then, for any $p\in C'  \cap  \partial C$, the intersection of all balls  in the family $\mathcal O$ which contain $p$ is non-empty (more precisely, all these balls contain the point $- t \cdot e_m$, for a sufficiently small $t>0 $). In the notation used in \cite{Reshetnyak}, this means that $C'\cap \partial C$ is an $O_{\delta}$ set and the main results of \cite{Reshetnyak} can be applied.

In particular, we find a ball $W_0$ in   the orthogonal complement $\R^{m-1}$
of $e_m$ with the following property:  For any $w\in W_0$ the line $\gamma _w(t)=w+te_m$
intersects  $C'\cap \partial C$  in at most one point \cite[Theorem 1]{Reshetnyak}.

On the other hand, for all small $t>0$, the point  $\gamma _0(t)$ lies in the interior of $C$ \cite[Lemma 3.5]{Rataj}. Hence, choosing  $W_0$ smaller, if necessary, we may assume 
that $\gamma _w$ intersects the interior of $C'$ for any $w\in W_0$. 

For all small $t<0$, the point   $\gamma _0 (t)$ is not contained in $C$.
Otherwise, $-e_m \in  T_0C$ and, since $e_m$ is an inner point of the convex cone $T_0C$,  this would imply that $T_0C = \R^m$, in contradiction to our assumption $0\notin C_{reg}$. Hence, making $W_0$ smaller if  needed, we may assume that 
$\gamma _w (t_0)$ is not in $C$ for some fixed small $t_0<0$ and any $w\in W_0$.

Therefore, for any $w \in W_0$ the intersection of $\gamma _w$ and $C'$ is a compact
segment $I_w = [\gamma _w(f_w), \gamma _w (g_w)]$, for some $f_w  <g_w$.  Moreover, $\gamma _w(f_w)\in \partial C$ and $\gamma _w(g_w)$ lies on the sphere 
$\partial B_r(0)$ and $g_w>0$.

By compactness of $\partial C$, we deduce that the map $w\to f_w$ is continuous.
Thus, near the origin $0$, the set $C$ is given as the supergraph in $\R^m=\R^{m-1} \times \R$ of the continuous function 
$f:W_0\to \R$, $f(w):=f_w$.   

Applying \cite[Theorem 5]{Reshetnyak}, we deduce that the function $f$ is semi-convex  on $W_0$, thus there exists some $c\in \R$ such that the function
$$\hat f(w):= f(w) + c\cdot ||w||^2$$ 
is convex on $W_0$.

Consider the diffeomorphism $\Phi :\R^m =\R^{m-1}\times \R \to \R^{m-1}\times \R= \R^m$  
$$\Phi (w,t):=( w, t + c  \cdot ||w^2||)\;.$$

 Thus, $K=\Phi (C) \cap  (W_0\times \R)$ is given in a neighborhood of $0$ as the supergraph of the convex function
$\hat f$. Thus, the smooth diffeomorphism $\Phi^{-1}$  sends a neighborhood of $0$
in the convex set $K$  onto a neighborhood of $0$ in $C$. 
This finishes the proof of Theorem \ref{thm: 2} in case $m=n$.

\subsection{Lipschitz continuity of tangent spaces} 
We turn to the general case $n\geq m$  and  fix   $ C'\subset C, \varepsilon, \delta , r$  as in Subsection \ref{subsec: prepare}. 
As in Subsection \ref{subsec: basic}, we denote by $\hat T_p C$ the $m$-dimensional Euclidean subspace of $T_pM$ generated by the cone $T_pC$, for any $p\in  C'$.
  We are going to deduce    from   Corollary \ref{cor: core} the following strengthening of Lemma \ref{lem: cont}:

\begin{prop} \label{prop: Lip}
The map $p\to \hat T_p C$ from $C'$ to the Grassmanian $\mathrm {\bf Gr}_{m,n}$ of $m$-dimensional subspaces of $\R^n$ is  Lipschitz continuous.
\end{prop}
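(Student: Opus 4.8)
The plan is to fix a point $p\in C'$ and estimate the distance in the Grassmannian between $\hat T_pC$ and $\hat T_qC$ for $q\in C'$ close to $p$, using Corollary \ref{cor: core}. Recall the geometric meaning of the distance on $\mathbf{Gr}_{m,n}$: two $m$-dimensional subspaces $U,U'$ are close if and only if every unit vector of $U$ lies close to $U'$ and vice versa; since both spaces have the same dimension $m$, it suffices to bound the one-sided deviation $\sup\{\,d(v,\hat T_qC) : v\in \hat T_pC,\ \|v\|=1\,\}$ (and then symmetrize, or invoke the standard fact that for equidimensional subspaces the Hausdorff distance between unit spheres is controlled by this one-sided quantity up to a universal factor, provided it is, say, $<1/2$). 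So the core of the argument is: produce a constant $L$ and a scale $s_0>0$ such that for all $q\in C'\cap B_{s_0}(p)$ and all unit $v\in \hat T_pC$ one has $d(v,\hat T_qC)\le L\|p-q\|$.

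\textbf{Key steps.} First I would exploit the hypothesis from Subsection \ref{subsec: prepare} that $T_pC$ contains an $m$-dimensional ball of radius $2\varepsilon$ about a unit vector $v_p$. This means $v_p\in T_{p,\varepsilon}C$, and more importantly it forces $\hat T_pC$ to be spanned by the subcone $T_{p,\varepsilon}C$ in a \emph{quantitative} way: every unit vector $w\in\hat T_pC$ can be written as $w=a\,\xi_1-b\,\xi_2$ with $\xi_1,\xi_2\in T_{p,\varepsilon}C$ unit vectors and $a,b$ bounded by a constant depending only on $\varepsilon$ (geometrically, move from $w$ a bounded distance into the thick cone, in two opposite directions). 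Indeed, since the ball $\bar B_{2\varepsilon}(v_p)\cap \hat T_pC \subset T_pC$ and actually $\subset T_{p,\varepsilon}C$ after shrinking, the vectors $v_p\pm \varepsilon w$ lie in $T_{p,\varepsilon}C$ for every unit $w\in\hat T_pC$, whence $w=\tfrac1{2\varepsilon}\big((v_p+\varepsilon w)-(v_p-\varepsilon w)\big)$ is a fixed linear combination with coefficients of size $\tfrac1{2\varepsilon}$ of two vectors of $T_{p,\varepsilon}C$ of norm $\le 1+\varepsilon$. Then I apply Corollary \ref{cor: core} at the point $p$ with this $\varepsilon$: it yields $s=s(p,\varepsilon)>0$ so that $d(\xi,T_qC)\le 2\mu\delta\|p-q\|\,\|\xi\|$ for every $\xi\in T_{p,\varepsilon}C$ and every $q\in C\cap B_s(p)$. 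Since $T_qC\subset\hat T_qC$ and $\hat T_qC$ is a \emph{linear} subspace, the distance $d(\,\cdot\,,\hat T_qC)$ is a genuine norm-like (sublinear and homogeneous) function, so it passes through the linear combination $w=\tfrac1{2\varepsilon}(v_p+\varepsilon w)-\tfrac1{2\varepsilon}(v_p-\varepsilon w)$ to give $d(w,\hat T_qC)\le \tfrac1{2\varepsilon}\cdot 2\cdot 2\mu\delta(1+\varepsilon)\|p-q\|=:L_p\|p-q\|$ for all unit $w\in\hat T_pC$ and $q\in C'\cap B_s(p)$. This is the desired one-sided Lipschitz bound at $p$ with an \emph{a priori} point-dependent constant $L_p$ and scale $s(p,\varepsilon)$.

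\textbf{From pointwise to uniform, and symmetrization.} The estimate above, run at every $p\in C'$, shows $p\mapsto \hat T_pC$ is locally Lipschitz on $C'$; to upgrade to (globally) Lipschitz on the compact set $C'$ one notes first that the relevant $\varepsilon$ is the same uniform $\varepsilon$ from Subsection \ref{subsec: prepare} (valid at every $p\in C'$), so $L_p$ is in fact a uniform constant $L_0=\tfrac{2\mu\delta(1+\varepsilon)}{\varepsilon}$; only the scale $s(p,\varepsilon)$ was allowed to depend on $p$. A Lipschitz bound with a uniform constant valid on a cover by balls of radii bounded below (by a positive number, using compactness of $C'$ and, if needed, a Lebesgue-number argument for the covering $\{B_{s(p,\varepsilon)}(p)\}$) together with boundedness of the diameter of $\mathbf{Gr}_{m,n}$ immediately gives a global Lipschitz constant on $C'$: for far-apart points the bound is trivial, for nearby points it is $L_0\|p-q\|$. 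Finally the one-sided estimate is symmetrized: if $\|p-q\|$ is small enough that $L_0\|p-q\|<\tfrac13$, then the one-sided deviation of $\hat T_pC$ from $\hat T_qC$ being $<\tfrac13$ forces (for equidimensional subspaces) the reverse deviation, and hence the Grassmannian distance itself, to be comparable — bounded by $C_m\cdot L_0\|p-q\|$ for a dimensional constant $C_m$ — which is exactly Lipschitz continuity of $p\mapsto\hat T_pC$ on $C'$. I expect the main obstacle to be the bookkeeping in the step "$\hat T_pC$ is uniformly spanned by $T_{p,\varepsilon}C$": one must be slightly careful that the ball of radius $2\varepsilon$ about $v_p$ lies not merely in $T_pC$ but in $T_{p,\varepsilon'}C$ for some $\varepsilon'>0$ comparable to $\varepsilon$, so that Corollary \ref{cor: core} is actually applicable to the vectors $v_p\pm\varepsilon w$; shrinking radii and adjusting constants handles this, but it is the place where a careless argument would break.
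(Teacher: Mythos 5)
Your core estimate is essentially the paper's. The paper isolates the linear algebra in a preliminary ``Claim'' (if $d(u,W)\leq t$ for all $u$ in a ball $B_\varepsilon(u_0)\cap U$, then the one-sided Grassmann distance from $U$ to $W$ is at most $2t/\varepsilon$, by linearity of the projection onto $W^\perp$), and then applies Corollary~\ref{cor: core} on the thick subcone; your decomposition $w=\tfrac{1}{2\varepsilon}\big((v_p+\varepsilon w)-(v_p-\varepsilon w)\big)$ is just that Claim's proof spelled out, and the $T_{p,\varepsilon'}C$ bookkeeping you flag is a genuine but minor adjustment that the paper also glosses over. As a side remark, the symmetrization step you sketch is unnecessary: for subspaces of equal dimension the quantity $\sup_{u\in U,\|u\|=1}d(u,W)$ is already symmetric in $U$ and $W$, which the paper uses implicitly by calling it a metric on $\mathrm{\bf Gr}_{m,n}$.

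The real gap is in the passage from the pointed estimate to a global Lipschitz bound. What Corollary~\ref{cor: core} produces is: for each $p$ there is a scale $s(p,\varepsilon)$, a priori depending on $p$, such that $d(\hat T_pC,\hat T_qC)\leq L_0\|p-q\|$ for $q\in C'\cap B_{s(p,\varepsilon)}(p)$. Your Lebesgue-number argument does not convert this into a uniform scale: a Lebesgue number $\ell$ for a finite subcover $\{B_{s(p_i,\varepsilon)}(p_i)\}$ only guarantees that two points $p,q$ with $\|p-q\|<\ell$ lie in a common $B_{s(p_i,\varepsilon)}(p_i)$, and the Corollary then controls $d(\hat T_{p_i}C,\hat T_pC)$ and $d(\hat T_{p_i}C,\hat T_qC)$ in terms of $\|p_i-p\|$ and $\|p_i-q\|$, both of which may be far larger than $\|p-q\|$; the triangle inequality does not yield a Lipschitz bound. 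What you would need is a uniform positive lower bound on $s(p,\varepsilon)$ itself over $p\in C'$, which does not come from compactness alone (in the proof of Corollary~\ref{cor: core} the scale is chosen via a finite $\varepsilon_0$-dense family of $C$-geodesics emanating from $p$, and its dependence on $p$ is not controlled). The paper avoids this entirely: it reads your uniform $L_0$ as a bound on the pointwise dilatation $\limsup_{q\to p}d(\hat T_pC,\hat T_qC)/\|p-q\|$ and then integrates this infinitesimal bound along $C$-geodesics, using that $C'$ is geodesically convex with intrinsic metric $2$-biLipschitz to the Euclidean one (Proposition~\ref{prop: intr}). You have all the ingredients except this last integration step, which should replace the compactness argument.
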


Before we embark on the proof,  recall that one (of many biLipschitz equivalent) metric on the Grassmannian $\mathrm {\bf Gr}_{m,n}$ is defined by  
$$d(U,W) \leq \sup _{u\in U, ||u||=1} d(u,W)\;,$$
for $U,W\in \mathrm {\bf Gr}_{m,n}$.  The distance $d(u,W)$ is the norm $||P(u)||$, where $P=P^{W^{\perp}}$ denotes the projection onto the orthogonal complement $W^{\perp}$ of $W$.  
The linearity of $P$ directly implies the following \newline
\emph{Claim}: If $u_0\in U$ is a unit vector and for any  $u$ in the ball   $B_{\varepsilon} (u_0)\cap U$  we have $d(u,W)\leq t$ then 
$$d(U,W) \leq \frac {2t} { \varepsilon}\,.$$

Now we can proceed with

\begin{proof}[Proof of Proposition \ref{prop: Lip}]
On $C'$ the intrinsic metric is $2$-biLipschitz to the induced one. Thus, it suffices to prove
the Lipschitz property pointwise, hence, to verify the following
\newline
\emph{Claim:} 
 There is some $\lambda =\lambda (\varepsilon, \delta)>0$ such that,
for any $p\in C'$, 
$$\limsup_{q\in C, q\to p} \frac {d(\hat T_p C, \hat  T_q C) } {||p-q||} \leq \lambda\;.$$ 

By Corollary \ref{cor: core}, for any $v$ in the ball $O$  in $T_pC$ of radius $\varepsilon$ around   a unit   vector $v_p$ and all  $q$, sucfficiently close to $p$, we have 
$$d(v, T_qC)\leq 4\mu \cdot \delta \cdot ||p-q||\;,$$
with some universal constant $\mu$.
The observation preceding the proof of the Proposition  implies
$$d(\hat T_pC, \hat T_qC)\leq  \frac {8\mu } {\varepsilon} \cdot  \delta \cdot ||p-q||\,,$$
for all such $q$.  
This shows the claim with $\lambda=  \rho \cdot 2\mu \cdot \delta$
and finishes the proof of Proposition \ref{prop: Lip}.
\end{proof}

\subsection{Finding a larger submanifold}
In the setting of Subsection \ref{subsec: prepare} we are going to show

\begin{lem} \label{lem: last}
There exists   $s>0$  and a  $\mathcal C^{1,1}$-diffeomorphism $\Psi$  between two  neighborhoods of $x=0$  in $\R^n$, such that 
$\Psi (C' \cap \bar B_s(0) ) \subset V=\R^m$.
\end{lem}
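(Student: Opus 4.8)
The plan is to build the diffeomorphism $\Psi$ out of a map that ``straightens'' the $m$-dimensional tangent spaces $\hat T_pC$ into the fixed coordinate subspace $V=\R^m$. The guiding idea: near $x=0$, the set $C'$ is squeezed into a $\mathcal C^{1,1}$ ``tube'' around $V$ because its tangent cones lie in the subspaces $\hat T_pC$, and these subspaces vary Lipschitz-continuously by Proposition~\ref{prop: Lip}. First I would fix, for each $p\in C'$, the orthogonal projection $\pi_p\colon\R^n\to (\hat T_pC)^\perp$ onto the normal space, and try to produce a $\mathcal C^{1,1}$ vector-valued function $F$ on a neighborhood $O$ of $0$ in $\R^n$ whose zero set contains $C'\cap \bar B_s(0)$ and whose differential at $0$ is the projection onto $V^\perp=\{0\}\times\R^{n-m}$. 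Concretely, one wants $F(p)=0$ and $D_pF$ equal to (or close to) the projection onto $(\hat T_pC)^\perp$ for $p\in C'$; then $\Psi:=(\,\mathrm{pr}_V,\,F\,)$, i.e. $\Psi(z)=(z_1,\dots,z_m,F(z))$, is a $\mathcal C^{1,1}$-diffeomorphism near $0$ (its differential at $0$ is invertible, being block upper-triangular with identity and the projection onto $\R^{n-m}$ on the diagonal), and $\Psi(C'\cap\bar B_s(0))\subset V$ by construction.

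The construction of $F$ is where Whitney's extension theorem enters. On the closed set $C'\cap\bar B_s(0)$ I would prescribe the $1$-jet: value $0$ at every $p$, and first derivative the linear map $L_p:=$ orthogonal projection onto $(\hat T_pC)^\perp$. To invoke the $\mathcal C^{1,1}$ Whitney extension theorem (Whitney's theorem in the $\mathcal C^{1,1}$, i.e. $\mathcal C^{1,\alpha}$ with $\alpha=1$, form) I must check two compatibility estimates on $C'$: (i) $|0-0-L_p(q-p)|\le \lambda' |p-q|^2$, i.e. the vector $q-p$ is within $O(|p-q|^2)$ of $\hat T_pC$ for $p,q\in C'$; (ii) $\|L_p-L_q\|\le \lambda'' |p-q|$, i.e. the prescribed derivatives are Lipschitz. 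Estimate (ii) is exactly Proposition~\ref{prop: Lip} (passing from $\hat T_pC$ to the projection onto its orthogonal complement is smooth on the Grassmannian). Estimate (i) follows from Federer's quadratic estimate $d(q-p,T_pC)\le \|p-q\|^2/(2\delta)$ in \cite[Theorem 4.18]{Federer} together with $T_pC\subset\hat T_pC$. Whitney then yields $F\in\mathcal C^{1,1}$ on a neighborhood of $C'\cap\bar B_s(0)$ with $F|_{C'\cap\bar B_s(0)}=0$ and $D_pF=L_p$ there; shrinking, we get $F$ on a neighborhood $O$ of $0$.

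The remaining point is to see that $\Psi=(\mathrm{pr}_V,F)$ is a local $\mathcal C^{1,1}$-diffeomorphism near $0$. Its differential at $0$ sends $(a,b)\in\R^m\times\R^{n-m}$ to $(a,L_0(a,b))=(a,b)$ since $\hat T_0C=V$ forces $L_0$ to be the projection onto $\{0\}\times\R^{n-m}$; hence $D_0\Psi=\Id$, and the inverse function theorem for $\mathcal C^{1,1}$ maps (a $\mathcal C^{1,1}$ map with invertible differential is a local $\mathcal C^{1,1}$-diffeomorphism) gives the claim. Choosing $s$ small enough that $C'\cap\bar B_s(0)\subset O$ and that $\Psi$ is a diffeomorphism on $\bar B_s(0)$ finishes the proof, with $\Psi(C'\cap\bar B_s(0))\subset\Psi(\{F=0\})=V$.

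The main obstacle I anticipate is purely bookkeeping: verifying the two Whitney compatibility estimates \emph{uniformly} on $C'$ (not just pointwise), which requires knowing that the constant in Proposition~\ref{prop: Lip} and in Federer's estimate are genuinely uniform over $C'$ — both are, given that $C'$ has reach $\ge\delta$ and the ball-of-radius-$\varepsilon$ condition holds at every $p\in C'$ as arranged in Subsection~\ref{subsec: prepare}. A secondary subtlety is that I only control $D_pF$ on $C'$, so I should take $L_p$ to be literally the orthogonal projection onto $(\hat T_pC)^\perp$ (rather than something only approximating it) so that $F$ vanishes on $C'$ with the correct derivative; once $F$ is extended by Whitney, its derivative off $C'$ is whatever Whitney produces, which is fine since we only need $\Psi$ to be a diffeomorphism and to map $C'$ into $V$.
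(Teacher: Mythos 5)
Your argument is correct, and while it uses the same two key ingredients as the paper --- the Lipschitz continuity of $p\mapsto \hat T_pC$ (Proposition~\ref{prop: Lip}) and Federer's quadratic estimate $d(q-p,T_pC)\leq \|p-q\|^2/(2\rho)$ --- it deploys them through a genuinely different construction. The paper parametrizes: it shows the orthogonal projection $P\colon C'\to V$ is injective with $2$-Lipschitz inverse $\Phi\colon Q=P(C')\to C'$, prescribes the $1$-jet of $\Phi$ on $Q\subset V$ to be the affine maps with linear part $(P|_{\hat T_{\Phi(z)}C})^{-1}$, verifies Whitney's conditions, extends $\Phi$ to a $\mathcal C^{1,1}$ map $V\to\R^n$ (automatically an embedding near $0$), then further extends this embedding to an ambient diffeomorphism and sets $\Psi=\Phi^{-1}$. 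You instead build a defining function: prescribe on the closed set $C'\cap\bar B_s(0)\subset\R^n$ the $1$-jet with value $0$ and derivative $L_p=P^{(\hat T_pC)^\perp}$, verify the two Whitney conditions for this jet (your (i) is the same Federer estimate, your (ii) the same Proposition~\ref{prop: Lip}, transported through the smooth map $W\mapsto P^{W^\perp}$ on the Grassmannian), extend to $F\in\mathcal C^{1,1}(\R^n,\R^{n-m})$, and take $\Psi=(\mathrm{pr}_V,F)$, whose differential at $0$ is the identity. Your route is arguably a bit cleaner at the end: the paper has to separately argue that $P|_{C'}$ is injective and that the embedding $\Phi$ extends to an ambient diffeomorphism, whereas for you both come for free --- $\Psi$ is already a map $\R^n\to\R^n$ and the inverse function theorem applies directly, with injectivity of $\mathrm{pr}_V|_{C'}$ a byproduct. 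The tradeoff is that the paper's formulation makes the Whitney jet data and the graph structure of $C'$ over $V$ more explicit, which it then reuses in Subsection~\ref{subsec: full}. Both are valid $\mathcal C^{1,1}$ Whitney-extension arguments; yours is the ``implicit'' and the paper's the ``explicit'' version of the same idea.
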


Once the Lemma is verifed, the image
$Q:=\Psi (C'  \cap \bar B_s(0)  )$ is  an $m$-dimensional  subset of positive reach in $V$ 
\cite[Theorem 4.19]{Federer}.
 Applying   
the result in the  case $m=n$  obtained in Subsection \ref{subsec: full},
we find a diffeomorhism $\Phi _0 $ between  two neighborhoods $O_1,O_2$ of $0$ in $V$, which sends $O_1\cap Q$ onto a convex subset $K$.  Then the statement of 
Theorem \ref{thm: 2} follows by taking $\Phi$ to be the composition $\Phi _0$ and $\Psi^{-1}$.

Therefore it remains to  provide:
\begin{proof}[Proof of Lemma \ref{lem: last}]
Rescaling the space we may assume that the reach $\rho$ is at least $1$. We fix some $\lambda \geq 1$ such that the map  $p\to \hat T_p C$ is $\lambda$-Lipschitz on $C'$, 
Proposition \ref{prop: Lip}.

For any  $0<s<\frac 1 {4\lambda } \leq \frac 1 4 $ and  for any $p\in C' \cap \bar B_s (0)$ we have
$$d(\hat T_p C, V)  \leq \frac 1 {4}\;.$$

We may replace $r$ by $s$ in the definition of $C'$ and assume that $C'= C' \cap \bar B_s (0)$. For any $p \neq q\in C'$ we deduce from  the above inequality and \cite[Theorem 4.8]{Federer} that 
$$d(\frac {p-q} {||p-q||}, V)  \leq d(\frac {p-q} {||p-q||}, \hat T_q C)  + d(\hat T_qC, V)\leq
\frac {s} 2 + \frac {1} {4} < \frac {1} {2}\;.$$

Consider the orthogonal projection $P:C'\to V$ and    denote by
$Q\subset V$  the compact image $Q:=P(C')$.
The above inequality implies that the $1$-Lipschitz map  $P:C'\to Q$ is injective and has a 2-Lipschitz continuous inverse $\Phi =P^{-1}:Q\to C'$.


  We claim that it is  sufficient to find an extension of $\Phi$ to a $\mathcal C^{1,1}$-map $\Phi:V\to \R^n$.   Once this is done, the differential of $\Phi$ at $0$ will automatically be the identity. Hence, the map $\Phi$ will be a 
$\mathcal C^{1,1}$-embedding  of a neighborhood $O_1$ of $0$ in $V$ into $\R^n$.
 Then, making $O_1$ smaller if necessary, we can extend $\Phi$ to a diffeomorphism between two neighborhoods of $0$ in $\R^n$.   In this case, the inverse map $\Psi$
will be a $\mathcal C^{1,1}$ diffeomorphism between two neighborhoods of $0$ in $\R^n$
and such that $\Psi |_{C'} =P$.  This would finish the proof.

It remains to find the extension of $\Phi :Q\to C'$ to a $\mathcal C^{1,1}$-map $\Phi:V=\R^m\to \R^n$.  In order to do so, we will rely on Whitney's extension theorem \cite{Whitney} in the form of \cite{Glaeser}.

For any $z\in Q$ with $\bar z=\Phi (z)$, consider the inverse  map $f^1_z: V\to \hat T_{\bar z} C \subset \R ^n$  of the linear isomorphism $P:\hat T_{\bar z} C  \to V$.  Denote by $f_z:V\to \R^n$ the affine map ("Taylor plynomial of degree one")
$$f_z(q):=\Phi (z) + f^1_z (q-z)\,.$$
Whitney's extension theorem in the form of \cite[Proposition VII]{Glaeser} implies the following. There exists an extension of $\Phi:Q\to \R^n$ to a $\mathcal C^{1,1}$-map $\Phi:\R^m \to \R^n$ with $D_z \Phi = f^1_z$ for all $z\in Q$ if and only if for some $c>0$  and all $y,z \in $ the  two subsequent conditions are satisfied:
\begin{itemize}

\item  $||f^1_z -f^1_y || \leq c \cdot ||z-y|| \;.$

\item  $||f_z (z)- f_y(z)||  =|| \Phi (z)- \Phi (y) - f^1_y (z-y)||\leq c \cdot ||z-y||^2$.

\end{itemize}

Set $\bar z:= \Phi (z)$ and $\bar y:= \Phi (y)$.

In order to verify the first item above, consider an arbitrary unit vector $v\in V$. Then the vector $f_z^1 (v) \in \hat T_{\bar z} C$ has norm at most $2$. By Proposition \ref{prop: Lip} we can find some vector
$w\in \hat T_{\bar y} C$ with 
$$||w-f^1_z (v)|| \leq 2\cdot \lambda \cdot ||\bar z- \bar y ||   \leq 4\cdot \lambda \cdot ||z-y ||\,,$$
where $\lambda$ is the Lipschitz constant  provided in  Proposition \ref{prop: Lip}.
Then 
$$||P(w)-v||=||P(w-f^1_z(v))|| \leq 4\cdot \lambda \cdot ||z-y ||\,.$$
Hence 
$$||w-f^1_y (v)||= ||f^1_y (P(w) -v)|| \leq  8\cdot \lambda \cdot ||z-y ||$$
With $c:=12 \lambda$ the triangle inequaliy implies
  $$||f^1_z(v) -f^1_y (v) || \leq c \cdot ||z-y|| \;.$$
Since $v$ was an arbitrary unit vector, this shows the first item.

In order to verify the second item, we apply \cite[Theorem 4.18]{Federer}  and find a vector $w\in T_{\bar y}  C\subset \hat T_y C$ with
$$w- (\bar z -\bar y) \leq \frac 1 2 ||\bar z  -\bar y|| ^2 \leq 2 \cdot ||z-y||^2$$
Therefore
$$||P(w)- (z-y)||\leq 2 \cdot ||z-y||^2\;.$$
Hence
$$||w-f^1_y (z-y)||\leq 4\cdot ||z-y||^2\;.$$
By the triangle inequality we deduce
$$||\bar z- \bar y -f_y^1 (z-y)|| \leq 6\cdot ||z-y||^2\;.$$
Thus, for $c=\max \{12\lambda, 6\} $ this finishes the verification of both conditions,  the proof of Lemma \ref{lem: last} and of Theorem \ref{thm: 2}. 
\end{proof}

\bibliographystyle{alpha}
\bibliography{reach}

\end{document}